\begin{document}
\numberwithin{equation}{section}

\def\1#1{\overline{#1}}
\def\2#1{\widetilde{#1}}
\def\3#1{\widehat{#1}}
\def\4#1{\mathbb{#1}}
\def\5#1{\frak{#1}}
\def\6#1{{\mathcal{#1}}}

\newcommand{\w}{\omega}
\newcommand{\Lie}[1]{\ensuremath{\mathfrak{#1}}}
\newcommand{\LieL}{\Lie{l}}
\newcommand{\LieH}{\Lie{h}}
\newcommand{\LieG}{\Lie{g}}
\newcommand{\de}{\partial}
\newcommand{\R}{\mathbb R}
\newcommand{\FH}{{\sf Fix}(H_p)}
\newcommand{\al}{\alpha}
\newcommand{\tr}{\widetilde{\rho}}
\newcommand{\tz}{\widetilde{\zeta}}
\newcommand{\tk}{\widetilde{C}}
\newcommand{\tv}{\widetilde{\varphi}}
\newcommand{\hv}{\hat{\varphi}}
\newcommand{\tu}{\tilde{u}}
\newcommand{\tF}{\tilde{F}}
\newcommand{\debar}{\overline{\de}}
\newcommand{\Z}{\mathbb Z}
\newcommand{\C}{\mathbb C}
\newcommand{\Po}{\mathbb P}
\newcommand{\zbar}{\overline{z}}
\newcommand{\G}{\mathcal{G}}
\newcommand{\So}{\mathcal{S}}
\newcommand{\Ko}{\mathcal{K}}
\newcommand{\U}{\mathcal{U}}
\newcommand{\B}{\mathbb B}
\newcommand{\oB}{\overline{\mathbb B}}
\newcommand{\Cur}{\mathcal D}
\newcommand{\Dis}{\mathcal Dis}
\newcommand{\Levi}{\mathcal L}
\newcommand{\SP}{\mathcal SP}
\newcommand{\Sp}{\mathcal Q}
\newcommand{\A}{\mathcal O^{k+\alpha}(\overline{\mathbb D},\C^n)}
\newcommand{\CA}{\mathcal C^{k+\alpha}(\de{\mathbb D},\C^n)}
\newcommand{\Ma}{\mathcal M}
\newcommand{\Ac}{\mathcal O^{k+\alpha}(\overline{\mathbb D},\C^{n}\times\C^{n-1})}
\newcommand{\Acc}{\mathcal O^{k-1+\alpha}(\overline{\mathbb D},\C)}
\newcommand{\Acr}{\mathcal O^{k+\alpha}(\overline{\mathbb D},\R^{n})}
\newcommand{\Co}{\mathcal C}
\newcommand{\Hol}{{\sf Hol}(\mathbb H, \mathbb C)}
\newcommand{\Aut}{{\sf Aut}(\mathbb D)}
\newcommand{\D}{\mathbb D}
\newcommand{\oD}{\overline{\mathbb D}}
\newcommand{\oX}{\overline{X}}
\newcommand{\loc}{L^1_{\rm{loc}}}
\newcommand{\la}{\langle}
\newcommand{\ra}{\rangle}
\newcommand{\thh}{\tilde{h}}
\newcommand{\N}{\mathbb N}
\newcommand{\kd}{\kappa_D}
\newcommand{\Ha}{\mathbb H}
\newcommand{\ps}{{\sf Psh}}
\newcommand{\Hess}{{\sf Hess}}
\newcommand{\subh}{{\sf subh}}
\newcommand{\harm}{{\sf harm}}
\newcommand{\ph}{{\sf Ph}}
\newcommand{\tl}{\tilde{\lambda}}
\newcommand{\gdot}{\stackrel{\cdot}{g}}
\newcommand{\gddot}{\stackrel{\cdot\cdot}{g}}
\newcommand{\fdot}{\stackrel{\cdot}{f}}
\newcommand{\fddot}{\stackrel{\cdot\cdot}{f}}
\def\v{\varphi}
\def\Re{{\sf Re}\,}
\def\Im{{\sf Im}\,}
\def\rk{{\rm rank\,}}
\def\rg{{\sf rg}\,}

\newtheorem{theorem}{Theorem}[section]
\newtheorem{lemma}[theorem]{Lemma}
\newtheorem{proposition}[theorem]{Proposition}
\newtheorem{corollary}[theorem]{Corollary}

\theoremstyle{definition}
\newtheorem{definition}[theorem]{Definition}
\newtheorem{example}[theorem]{Example}

\theoremstyle{remark}
\newtheorem{remark}[theorem]{Remark}
\numberwithin{equation}{section}

\title[Holomorphic Evolution]{Holomorphic evolution: metamorphosis of the Loewner equation}
\author[F. Bracci]{Filippo Bracci}
\address{Dipartimento Di Matematica\\
Universit\`{a} di Roma \textquotedblleft Tor Vergata\textquotedblright\ \\
Via Della Ricerca Scientifica 1, 00133 \\
Roma, Italy} \email{fbracci@mat.uniroma2.it}
\date\today




\maketitle

\tableofcontents

\section{Introduction}

The theory developed by Ch. Loewner \cite{Loewner} in 1923, and nowadays bearing his name, has been extended and used in the past decades to solve many different problems in the area of complex analysis. Just to name a few instances, the Loewner theory is one of the main tool in the de Branges' proof of the Bieberbach conjecture, and it has been recently successfully exploited in connection with stochastic equations to study scaling limits of various probabilistic and physical models (giving rise to the so called SLEs of Oded Schramm).

The original  idea of Loewner was to
represent a family of domains obtained by removing from the complex plane a Jordan arc  by means of a family (nowadays known as
a {\sl Loewner chain})  of univalent functions defined on the
unit disc and satisfying a suitable differential equation. Such
a machinery was later studied and extended to other types  of
simply connected domains  by  Kufarev in 1943 and Pommerenke in
1965 (\cite{Kuf1943}, \cite{Pommerenke-65}, and see also \cite[Chapter 6]{Pommerenke}).

The classical Loewner partial differential equation in the unit disc
$\D:=\{\zeta\in \C: |\zeta|<1\}$ is given by
\[
\frac{\de f_t(z)}{\de t}=-\frac{\de f_t(z)}{\de z}G(z,t).
\]
where $f_t:\D \to \C$ is a family of univalent mappings depending on the parameter $t\geq 0$, $f_t(0)=0$, $f_t'(0)=e^t$, and $G(w,t)=-w\frac {1+k(t)w}{1-k(t)w}$ for
some continuous function
$k:[0,+\infty)\rightarrow\partial\mathbb{D}$. The vector field $G(w,t)$ is a so-called {\sl Herglotz vector field}.

The classical {\sl radial Loewner equation}  is the following associated
non-autonomous ordinary differential equation
\begin{equation*}
\begin{cases}
\overset{\bullet}{w}  =G(w,t)& \quad\text{for almost every }t\in\lbrack s,\infty)\\
w(s)   =z.&
\end{cases}
\end{equation*}
The solutions $t\mapsto
\v_{s,t}(z)$ of such a differential equation possess certain ``semigroup-type'' properties, and
the family $(\v_{s,t})$ is called
 an {\sl evolution family} of the unit disc.

The relations among the three objects, that is, Loewner's chains, evolution families and Herglotz vector fields, is the core of Loewner's theory and its extensions and generalizations.

The aim of this note is to provide an updated account of the extensions and generalizations of the original Loewner theory, with a particular view toward the geometrical and dynamical aspects of the above equations and their invariant forms. We will start by presenting quite in detail the original work of Loewner, and the extension by Pommerenke, Kufarev and Schramm in the unit disc. Next, we will describe infinitesimal generators of semigroups of holomorphic self-maps on complex manifolds, with the target of presenting a very general  and natural definition of Herglotz vector fields and evolution families, as discovered by the author and M. D. Contreras and S. D\'iaz-Madrigal in \cite{Br-Co-Di-EF1}, \cite{Br-Co-Di-EF2}. In this new framework, the accent is put on the evolution families considered as families of holomorphic self-maps resembling semigroups. Hence, on the one side they are objects that can be iterated, creating a ``dynamical system'', and on the other side they are generated by  non-autonomous vector fields which are semicomplete for almost all times. In this optic, Loewner chains are essentially viewed as ``intertwining mappings'' which conjugated the dynamical behavior of an evolution family on a complex manifold with the geometry of an ``abstract basin of attraction'' which we call the {\sl Loewner range}. The construction of Loewner chains, taken by the work of the author with L. Arosio, H. Hamada, G. Kohr \cite{ABHK}, is categorial and provides the ``PDE Loewner equation'' in its full generality.

Some results are presented with a sketch of the proof, and some efforts are made to relate various branches of the theory to a single unified source (for instance the reverse equation used from Schramm is presented here starting from the Loewner classical equation). However, many applications of Loewner theory and other areas of related researches are not discussed here. For further material, we refer the reader to the survey paper \cite{ABCD}, the books \cite{G-K} (for basic one dimensional and the higher dimensional theory) and \cite{La} (for SLEs), and the recent paper \cite{CDG-annulo, {CDG-annulo2}} (for the multi-connected cases).

\medskip

These notes are the revised version of the text of the plenary talk the author gave at XIX Congress  of the Unione Matematica Italiana held in Bologna, 12-17 September 2011. The author wants to sincerely thank the organizers for both the invitation to give the talk and the possibility of writing these notes. Also, the author warmly thanks Pavel Gumenyuk for several comments which improved this paper.

\section{The classical Loewner equation}\label{sect-class}

We define the ``class $\So$'' as the set of all univalent ({\sl i.e.} holomorphic and injective) function $f:\D \to \C$ such that $f(0)=0$, $f'(0)=1$. Namely,
\[
\So:=\{f:\D \to \C : f \hbox{ is univalent}, f(0)=0, f'(0)=1\}.
\]

By the Riemann mapping theorem, given a simply connected domain $D\subset \C$, $D\neq \C$, there exists a univalent and surjective mapping $g: \D \to D$, called a {\sl Riemann map}. Up to translation we can assume that $0\in D$. With this assumption, one can show that there exists a {\sl unique} Riemann map $g_D:\D \to D$ such that $g_D(0)=0$ and $\lambda:=g_D'(0)>0$. In particular, if we dilate $D$ by $1/\lambda$, the unique Riemann map $g:\D\to \frac{1}{\lambda}D$ such that $g(0)=0$, $g'(0)>0$, belongs to the class $\So$.

\begin{definition}
A {\sl slit map} $f:\D \to \C$ is a univalent mapping such that $f(0)=0$ and the complement of $f(\D)$ in $\C$ is a Jordan arc $\Gamma$, {\sl i.e.},  there exists a continuous  injective curve $\gamma:[0,T)\to\C$ such that $\lim_{t\rightarrow T}|\gamma(t)|=\infty$, $\Gamma:=\gamma([0,T))$ and $f(\D)=\C\setminus \Gamma$.
\end{definition}

One can prove that slit maps are dense (with respect to the topology of uniform convergence on compacta) in the class $\So$. Therefore, if one can prove certain bounds on slit maps, they propagate to all the class $\So$.

Now we are going to discuss the so-called ``parametric representation of slit maps'' due to Ch. Loewner.

Let  $\gamma:[0,T)\to\C$ be a Jordan arc such that $\lim_{t\rightarrow T}|\gamma(t)|=\infty$, $\Gamma:=\gamma([0,T))$. Assume, up to translation, that $0\not\in \Gamma$. Let $\Gamma_t:=\gamma([t,T))$, for $t\in [0,T)$. Then $D_t:=\C\setminus \Gamma_t$ is a family of simply connected domains with the property that $D_s\subset D_t$ for $s<t$.

Let $f_t:\D \to D_t$ be the Riemann map such that $f_t(0)=0, f_t'(0)>0$. Then we can expand $f_t$ and obtain
\[
f_t(z)=\beta(t)[z+b_2(t)z^2+b_3(t)z^3+\ldots].
\]
By the geometry of $D_t$'s, one can show that $t\mapsto \beta(t), b_j(t)$ are continuous for all $j\geq 2$. Moreover, consider  the  map $z\mapsto \v_{s,t}(z):=f_t^{-1}\circ f_s(z)$ ($s<t$). Then $\v_{s,t}:\D \to \D$ is holomorphic, injective, $\v_{s,t}(0)=0$ and it is not the identity map. By the Schwarz' Lemma it follows then $\v_{s,t}'(0)<1$. Hence, $t\mapsto \beta(t)$ is strictly increasing.

It can be  also proved that $\lim_{t\to T}\beta(t)=+\infty$. Indeed, if this is not the case, then thanks to the so-called ``distortion theorems'' for the class $\So$, one would find a converging (in the topology of uniform convergence on compacta) subsequence of $\{f_t\}$ which would converge to a biholomorphism from $\D$ to $\C$.

Therefore, we can re-parameterize the Jordan arc $\gamma$ by define $\sigma(s):=\beta^{-1}(e^s)$ and $\tilde{\gamma}:[0,+\infty)\to \C$ as $\tilde{\gamma}(s)=\gamma(\sigma(s))$. With this new parametrization we have
\[
f_t(z)=e^t\left[z+\sum_{j\geq 2} b_j(t) z^j \right].
\]

\begin{definition}
A family $(f_t)$ of univalent mappings $f_t:\D \to \C$ is called a {\sl classical Loewner chain} if
\begin{enumerate}
  \item $f_t(0)=0, f_t'(0)=e^t$ for all $t\geq 0$ and
  \item $f_s(\D)\subset f_t(\D)$ for all $0\leq s\leq t$.
\end{enumerate}
\end{definition}

Now we can state Loewner's original result:

\begin{theorem}[Loewner]\label{classical-Loewner}
Let $(f_t)$ be a classical Loewner chain of slit maps. Let $\v_{s,t}:=f_t^{-1}\circ f_s: \D \to \D$, $0\leq s\leq t$. Then there exists $k:[0,+\infty)\to \de \D$ a continuous function such that for all $t\in [0,+\infty)$ and $z\in \D$
\begin{equation}\label{classicalLoewner}
\frac{\de \v_{s,t}(z)}{\de t}=-\v_{s,t}(z)\frac{1+k(t)\v_{s,t}(z)}{1-k(t)\v_{s,t}(z)}.
\end{equation}
Moreover, $\lim_{t\to \infty}e^t \v_{s,t}(z)=f_s(z)$ uniformly on compacta.
\end{theorem}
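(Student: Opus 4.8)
The plan is to analyze the transition maps $\v_{s,t} = f_t^{-1}\circ f_s$ directly, extract the differential equation from a difference quotient in $t$, and identify the driving term with the point where the slit is being ``eaten''. First I would record the elementary semigroup-type identity $\v_{s,u} = \v_{t,u}\circ\v_{s,t}$ for $s\le t\le u$, together with the normalization $\v_{s,t}(0)=0$ and $\v_{s,t}'(0) = e^{s-t}$ coming from $f_t'(0)=e^t$. Fixing $s$, I want to differentiate $t\mapsto\v_{s,t}(z)$; by the semigroup identity it suffices to understand the infinitesimal behavior of $\v_{t,t+h}$ as $h\to 0^+$, since
\begin{equation*}
\frac{\de\v_{s,t}(z)}{\de t} = \lim_{h\to 0^+}\frac{\v_{s,t+h}(z)-\v_{s,t}(z)}{h} = \left.\frac{\de}{\de h}\right|_{h=0}\v_{t,t+h}(w)\Big|_{w=\v_{s,t}(z)},
\end{equation*}
so the whole problem reduces to showing that $\v_{t,t+h}(w) = w + h\,p(w,t) + o(h)$ for a vector field $p$ of Herglotz type, uniformly on compacta.

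The key step is to identify $p(w,t)$. Since $\v_{t,t+h} = f_{t+h}^{-1}\circ f_t$ maps $\D$ into $\D$, fixes $0$, is not the identity, and has derivative $e^{-h}<1$ at $0$, it omits a small arc near a single boundary point; as $h\to 0$ this omitted set shrinks to one point of $\de\D$, which I will call $k(t)$ — concretely, $k(t)$ is the preimage under $f_t$ (in the sense of prime ends / radial limits) of the tip $\gamma(t)$ of the remaining slit $\Gamma_t$. The half-plane structure forces the infinitesimal generator to be the Herglotz field associated with the point mass at $k(t)$: a self-map of $\D$ fixing $0$ with $p(0)=0$, $\Re\big(p(w)/w\big)\le 0$, and a single boundary ``source'' at $k(t)$ must be $p(w,t) = -w\,\frac{1+k(t)w}{1-k(t)w}$. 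This is the Schwarz–Pick / Herglotz representation applied to the infinitesimal generator of the one-parameter-like family $\v_{t,t+h}$ near the identity. I would make this rigorous by writing $\v_{t,t+h}(w) = w\cdot e^{-h}\cdot\exp\!\big(\int_{\de\D} \frac{\xi+w}{\xi-w}\,d\mu_{t,h}(\xi)\big)$ — the Herglotz representation of a self-map of $\D$ fixing the origin — where $\mu_{t,h}\ge 0$ has total mass $h$ (to match the derivative at $0$), and then showing $\frac{1}{h}\mu_{t,h}\to \delta_{k(t)}$ weakly as $h\to0$, using that $f_t(\D)=D_t$ and $f_{t+h}(\D)=D_{t+h}$ differ only by the thin sliver $\gamma([t,t+h])$ which accumulates at $\gamma(t)$.

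Continuity of $k(t)$ follows from continuity of $t\mapsto f_t$ (equivalently of the coefficients $\beta(t),b_j(t)$ noted above) together with continuity of $t\mapsto\gamma(t)$ and the behavior of Riemann maps / prime ends: if $k(t_n)\to \ell\ne k(t)$ along some sequence $t_n\to t$, comparing the omitted arcs of $\v_{t_n,t_n+h}$ would contradict that they localize near $\gamma(t)$. Finally, for the limit $\lim_{t\to\infty}e^t\v_{s,t}(z) = f_s(z)$: from $f_t\circ\v_{s,t} = f_s$ and $f_t(w) = e^t[w + b_2(t)w^2+\cdots]$ we get $e^t\v_{s,t}(z) = f_s(z) - e^t\sum_{j\ge 2}b_j(t)\v_{s,t}(z)^j$; since $\v_{s,t}(z)\to 0$ (because $\v_{s,t}'(0)=e^{s-t}\to 0$ and Schwarz's lemma gives $|\v_{s,t}(z)|\le e^{s-t}$) while the distortion theorems for $\So$ bound $e^t|b_j(t)\v_{s,t}(z)^{j-1}|$ by $C\,|e^t\v_{s,t}(z)|\cdot e^{s-t}$, the correction term tends to $0$ locally uniformly, giving the claim. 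The main obstacle is the middle step: making precise the weak convergence $\frac1h\mu_{t,h}\to\delta_{k(t)}$ and the accompanying uniform-on-compacta estimate $\v_{t,t+h}(w)=w+h\,p(w,t)+o(h)$, since this is where the geometry of the shrinking slit must be converted into an analytic statement about boundary measures; everything else is bookkeeping with Schwarz's lemma and the distortion theorems.
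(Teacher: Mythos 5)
Your proposal follows essentially the same route as the paper: both represent $\log(\v(z)/z)$ via the Poisson--Herglotz formula with a boundary measure of total mass equal to the time increment, supported on the arc of $\de\D$ omitted because of the slit, and obtain the equation by letting that measure concentrate at the preimage $k(t)$ of the tip (the paper implements your weak-$*$ concentration step concretely via the mean value theorem on the shrinking arc $A_{s,t}$, and likewise omits the details of the final limit $e^t\v_{s,t}\to f_s$ that you sketch with the growth theorem). Only a cosmetic slip: in your representation $\v_{t,t+h}(w)=w\,e^{-h}\exp\bigl(\int_{\de\D}\frac{\xi+w}{\xi-w}\,d\mu_{t,h}(\xi)\bigr)$ the prefactor $e^{-h}$ is inconsistent with $\mu_{t,h}(\de\D)=h$ (it would force $\v_{t,t+h}'(0)=1$); the correct normalization is $\v_{t,t+h}(w)=w\exp\bigl(-\int_{\de\D}\frac{\xi+w}{\xi-w}\,d\mu_{t,h}(\xi)\bigr)$, which does not affect the rest of the argument.
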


\begin{proof}[Sketch of the proof]
The map $\v_{s,t}:\D \to \D$ is univalent, $\v_{s,t}(0)=0$, $\v_{s,t}'(0)=e^{s-t}$. Note also that
\[
\v_{s,t}=\v_{u,t}\circ \v_{s,u}\quad 0\leq s\leq u\leq t.
\]
Also, by Carath\'eodory's extendability result (see, {\sl e.g.} \cite{Pommerenke}), the map $\v_{s,t}$ is continuous up to $\de \D$ for all $0\leq s\leq t$.

The function $\D\setminus\{0\} \ni z\mapsto g_{s,t}(z):=\frac{\v_{s,t}(z)}{z}$ extends holomorphic in $\D$ by defining $g_{s,t}(0)=\v_{s,t}'(0)=e^{s-t}$ and $g_{s,t}(z)\neq 0$ for all $z\in \D$ because $\v_{s,t}$ is injective and it is equal to zero at $z=0$. Hence, since $\D$ is simply connected, it is possible to define the logarithm $\phi_{s,t}(z):=\log g_{s,t}(z)$, choosing the branch of $\log$ such that  $\log e^{s-t}=s-t$.

Now, the image of $\D$ under $\v_{s,t}$ is $\D\setminus \Gamma_{s,t}$, where $\Gamma_{s,t}$ is a Jordan arc contained in $\D$ and starting from a boundary point. Therefore, there exists an arc $A_{s,t}\subset \de \D$ such that $\v_{s,t}(\de \D \setminus A_{s,t})\subset \de \D$, hence $\Re \phi_{s,t}(\de \D \setminus A_{s,t})=0$. Also, $\Re \phi_{s,t}(z)<0$ for all $z\in \D$.
We apply then the Poisson formula  and obtain
\begin{equation*}
\phi_{s,t}(z)=\frac{1}{2\pi}\int_{A_{s,t}}\frac{e^{i\theta}+z}{e^{i\theta}-z} \Re\phi_{s,t}(e^{i\theta}) d \theta.
\end{equation*}
By the mean value theorem, there exist $\theta', \theta''\in \R$ such that $e^{i\theta'}, e^{i\theta''}\in A_{s,t}$ and
\begin{equation*}
\begin{split}
&\phi_{s,t}(\v_{m,s}(z))=\frac{1}{2\pi}\int_{A_{s,t}}\frac{e^{i\theta}+\v_{m,s}(z)}{e^{i\theta}-\v_{m,s}(z)} \Re\phi_{s,t}(\v_{m,s}(e^{i\theta})) d \theta\\&=
\frac{1}{2\pi}\int_{A_{s,t}}\Re\phi_{s,t}(\v_{m,s}(e^{i\theta})) d \theta\left[\Re\left(\frac{e^{i\theta'}+\v_{m,s}(z)}{e^{i\theta'}-\v_{m,s}(z)}\right)+i\Im \left(\frac{e^{i\theta''}+\v_{m,s}(z)}{e^{i\theta''}-\v_{m,s}(z)}\right) \right].
\end{split}
\end{equation*}
But
\[
\frac{1}{2\pi}\int_{A_{s,t}}\Re\phi_{s,t}(\v_{m,s}(e^{i\theta})) d \theta=\Re \phi_{s,t}(\v_{m,s}(0))=\Re \phi_{s,t}(0)=s-t.
\]
Therefore
\begin{equation}\label{mez}
\phi_{s,t}(\v_{m,s}(z))=(s-t)\left[\Re\left(\frac{e^{i\theta'}+\v_{m,s}(z)}{e^{i\theta'}-\v_{m,s}(z)}\right)+i\Im \left(\frac{e^{i\theta''}+\v_{m,s}(z)}{e^{i\theta''}-\v_{m,s}(z)}\right) \right].
\end{equation}
Now, $\v_{s,t}(\v_{m,s}(z))=\v_{m,t}(z)$ for $0\leq m\leq s\leq t$, hence
\[
\phi_{s,t}(\v_{m,s}(z))=\log \left( \frac{\v_{s,t}(\v_{m,s}(z))}{\v_{m,s}(z)}\right)=\log \frac{\v_{m,t}(z)}{\v_{m,s}(z)}.
\]
It can be proved that, as $t\to s$, the arc $A_{s,t}$ shrinks to a point $\lambda(s)\in \de \D$ which represents the preimage of the tip of the arc $\Gamma_{s,t}$ under $\v_{s,t}$. From this, and from \eqref{mez} we obtain
\begin{equation*}\begin{split}
\lim_{t\to s} \frac{1}{t-s}\log \frac{\v_{m,t}(z)}{\v_{m,s}(z)}&=-\lim_{t\to s} \left[\Re\left(\frac{e^{i\theta'}+\v_{m,s}(z)}{e^{i\theta'}-\v_{m,s}(z)}\right)+i\Im \left(\frac{e^{i\theta''}+\v_{m,s}(z)}{e^{i\theta''}-\v_{m,s}(z)}\right) \right]\\
&=-\frac{\lambda(s)-\v_{m,t}(z)}{\lambda(s)+\v_{m,t}(z)}.
\end{split}\end{equation*}
Unwrapping the left hand side, we obtain \eqref{classicalLoewner} with $k(t):=1/\lambda(t)$. The rest of the statement is technical and we omit it (see, {\sl e.g.} \cite{Duren}).
\end{proof}

In the proof of Loewner's equation we defined a family $(\v_{s,t})$ of univalent self-mappings of the unit discs having certain semigroup properties. Abstracting those properties we give the following

\begin{definition}\label{def-class-ev}
A family $(\v_{s,t})$ with $0\leq s\leq t<+\infty$ of univalent self-maps of the unit disc $\D$ is a {\sl classical evolution family} if
\begin{enumerate}
  \item $\v_{s,t}=\v_{u,t}\circ \v_{s,u}$ for all $0\leq s\leq u\leq t$,
  \item $\v_{s,t}(0)=0, \v_{s,t}'(0)=e^{s-t}$.
\end{enumerate}
\end{definition}

Given a classical Loewner chain $(f_t)$, it is possible to define a classical evolution family $(\v_{s,t})$ by means of the formula
\begin{equation}\label{functional}
f_t\circ \v_{s,t}=f_s.
\end{equation}

Since $\v_{s,t}=f_t^{-1}\circ f_s$, it is clear that such an evolution family is uniquely determined. However, the converse is not so immediate. We will discuss later (see Theorem \ref{ABHKthm}) of the uniqueness of Loewner chains associated to a given evolution family on complete hyperbolic manifolds, and the reader can easily check that in the classical case the uniqueness follows as a result of the normalization chosen in the definition of classical Loewner chains.

Equation \eqref{classicalLoewner} can be re-written in the following way. Let $k:[0,+\infty)\to \de \D$ be continuous and let
\[
p(z,t):=\frac{1+k(t)z}{1-k(t)z}.
\]
Then $\Re p(z,t)\geq 0$ for all $z\in \D$ and $t\in [0,+\infty)$. Let
\[
G(z,t)=-zp(z,t).
\]

Loewner's equation \eqref{classicalLoewner} reads as
\begin{equation}\label{class-ode}
\frac{\de \v_{s,t}(z)}{\de t}=G(\v_{s,t}(z),t).
\end{equation}

The equation \eqref{classicalLoewner} (or the more general equation \eqref{class-ode}) are known as {\sl radial Loewner equations}.

Looking abstractly to the properties of $G$, we give the following definition

\begin{definition}\label{def-class-Herglotz}
A {\sl classical Herglotz vector field} $G(z,t)=-zp(z,t)$ is a non-autonomous vector field such that
\begin{enumerate}
  \item $[0,+\infty)\ni t\mapsto p(z,t)$ is measurable for all $z\in \D$,
  \item $z\mapsto p(z,t)$ is holomorphic for all $t\in [0,+\infty)$
  \item $\Re p(z,t)\geq 0$ for almost all $t\in [0,+\infty)$,
  \item $p(0,t)=1$ for all $t\in [0,+\infty)$.
\end{enumerate}
\end{definition}

Differentiating \eqref{functional} and taking into account \eqref{class-ode} we obtain the following PDE:
\begin{equation}\label{PDE-class}
\frac{\de f_t(z)}{\de t}=-\frac{\de f_t(z)}{\de z}G(z,t).
\end{equation}

Ch. Pommerenke \cite{Pommerenke, Pommerenke-65} showed that \eqref{functional}, \eqref{class-ode}  and \eqref{PDE-class} hold in the context of classical Loewner chains, classical evolution families and classical Herglotz vector fields, and not only for slit maps. We will discuss later of a more general version of these results.

\subsection{The Bieberbach conjecture}

The Bieberbach conjecture states the following. Let $f\in \So$. Expand $f(z)=z+\sum_{j\geq 2} a_m z^m$. Then
\[
|a_m|\leq m\quad \forall m\in \N.
\]

The Bieberbach conjecture has been positively solved by L. de Branges \cite{dB}, who proved the so called Milin's conjecture (which implies the Bieberbach conjecture) using special functions and Loewner's equation  (a simplified proof is given by C. FitzGerald and Ch. Pommerenke \cite{FP}).

The case $m=2$ is a consequence of the so called ``area theorem'' (see, {\sl e.g.}, \cite{Duren}). The case $m=3$ was proved by Loewner himself, using his equation \eqref{PDE-class}. We give here a brief sketch of his idea. We start with
\begin{equation}\label{Low-PDE}
    \frac{\de f_t(z)}{\de t}=z\frac{\de f_t(z)}{\de z}p(z,t).
\end{equation}
Now, we expand
\[
p(z,t)=1+p_1(t)z+p_2(t)z^2+\ldots,
\]
and also, we write
\[
f_t(z)=e^tz+a_2(t)z^2+\ldots.
\]
Substituting in \eqref{Low-PDE} and equating coefficients with the same degree in $z$, we obtain for almost all $t\geq 0$,
\[
\frac{d}{dt} a_m(t)=\sum_{k=1}^{m-1} k a_k(t) p_{m-k}(t)+ma_m(t).
\]
Now, multiplying both sides by $e^{-mt}$ and integrating, one obtains an expression for $a_m$ which involves the terms $p_k$. By the distortion theorems for $p(z,t)$, it follows that $|p_k(t)|\leq 2$ for all $k$. From here (after some algebraic manipulations which are working well only for $m=2,3$), we obtain the estimates.

\subsection{Slit mappings and the Loewner differential equation} Given a continuous function $k:[0,+\infty)\to \de \D$, one can consider the PDE
\begin{equation}\label{gen-slit}
\frac{\de f_t(z)}{\de t}=z\frac{\de f_t(z)}{\de z}\frac{1+k(t)z}{1-k(t)z},
\end{equation}
with $z\in \D$. The function $k$ is called the {\sl driving term} of the equation.

Loewner's theorem \ref{classical-Loewner} shows that any evolution family of slit mappings satisfies \eqref{gen-slit} with a continuous driving term. The converse is not true: P. P. Kufarev \cite{Kuf1947} showed that the solutions to \eqref{gen-slit} with continuous driving term are not slit mappings in general.

The question is then which are the relations between the properties of the driving term $k$ in \eqref{gen-slit} and the family generated by the solutions. It is known that if the evolution derives from a slit which is real analytic, then $k$ is real analytic. A proof of this fact can be found in \cite{EE}, where C. Earle and A. Epstein proved also that if the slit is of class $C^m$ then the driving term is at least of class $C^{m-1}$.

In \cite{Ma-Ro}, D. Marshall and S. Rohde proved that if the slit in $\C$  is a ``quasiarc'' (namely it is the image of $[0,\infty)$ under a quasiconformal homeomorphism of $\C$) then the driving term is Lipschitz continuous with exponent $1/2$. And conversely,  there exists a constant $C>0$ such that if $|k(t)-k(s)|<C|t-s|^{1/2}$ for all $s,t\in [0,+\infty)$ then $\C\setminus f_t(\D)$ is a quasiarc for all $t$. In Kufarev's example, the driving term $k$ is Lipschitz continuous with exponent $3\sqrt{2}$, thus $C\leq 3\sqrt{2}$. J. Lind \cite{Li} has proved that the best constant $C$ is $4$. However, W. Kager, B. Nienhuis, L. P. Kadanoff \cite{KNK} showed that there exist examples of slit evolutions for which the associated driving terms have arbitrary big norm. In \cite{Pro-Vas}, D. Prokhorov and A. Vasil'ev  extended such results to the case of evolutions of so-called ``chordal type'' (see below) when the slit is an arc tangent to the boundary, proved that in such a case the driving term is $1/3$-Lipschitz.

\section{Kufarev-Loewner chordal equation}

In 1946 P. P. Kufarev \cite{Kuf1946} (developed later by  Kufarev himself, Sobolev and Sporysheva \cite{KSS}) proposed an equation of evolution in the upper
half-plane $\Ha:=\{z\in \C: \Im z>0\}$ analogous to the one proposed by Loewner in the unit
disc. Note that $\Ha$ and $\D$ are conformally equivalent by means of the Cayley transform
\[
\D \ni z\mapsto i\frac{1+z}{1-z}\in \Ha.
\]
However, Kufarev's equation is not just a transliteration from $\D$ to $\Ha$ by means of the Cayley transform of Loewner's evolution equation. Kufarev in fact considered a different equation, where the base point of the evolution is at $\infty$.
This process is connected to  physical problems in
hydrodynamics.

In order to introduce Kufarev's equation properly, let us fix
some notations. Let $\gamma$ be a Jordan arc in the upper
half-plane $\Ha$ with starting point $\gamma(0)=0$. Then, there
exists a unique Riemann map $f_t:\Ha\to\Ha\setminus
\gamma[0,t]$ normalized such that
$$
f_t(z)=z+\frac{c(t)}{z}+O\left(\frac{1}{z^2}\right).
$$ Up to a re-parametrization of the curve $\gamma$, one can assume that $c(t)=-2t$. Such a normalization is sometimes called the {\sl hydrodynamics normalization}.
Under this normalization, one can show that $f_t$ satisfies the
following differential equation. For all $t\geq 0$ and for all
$z\in\Ha$
\begin{equation}\label{hydro-Low}
\frac{\partial f_t(z)}{\partial t}=\frac{-2}{f_t(z)-k(t)}, \qquad f_0(z)=z,
\end{equation}
where $k:[0,+\infty)\to \R$ is a continuous  function. Conversely,
starting with a continuous function $k:[0,+\infty)\to\R$, one
can consider the non-autonomous holomorphic vector field
\[
P(w,t):=\frac{-2}{w-k(t)},
\]
and the associated initial value problem for each
$z\in\Ha$:
\begin{equation*}
\frac{d w}{dt}=P(w(t),t), \qquad w(0)=z.
\end{equation*}
Let $t\mapsto w^z(t)$ denote the only solution of the previous
system, and let $f_t(z):=w^z(t)$. Then $f_t:\Ha \to \Ha$ is univalent. This equation is nowadays known  as the {\sl chordal
Loewner differential equation} and the function $k$ is its
driving term. The name ``chordal'' is due to the picture that the images of
the solutions of the associated characteristic equation
draw when taking the time-limit: something like the half-plane
erased a chord joining two boundary points.

Moving back to the unit disc by means of the Cayley transform, it is easy to see that the
chordal Kufarev-Loewner equation takes the form
\begin{equation}\label{chordal}
\frac{dz}{dt}=(1-z)^2p(z,t), \qquad z(0)=z,
\end{equation}
where $\Re p(z, t)\geq 0$ for all $t\geq 0$ and $z\in \D$.

We will show later on that both the classical Loewner equation and its radial generalizations and the Kufarev equation are just particular cases of a more general picture.

D. Marshall kindly told me in a private conversation that the classical Loewner equation and the Kufarev one are equivalent in the sense that can be obtained one from the other by means of a suitable construction.

\section{Reversing evolution and SLE's}

The original Loewner equation (and the generalized Pommerenke's and Kufarev's equations) deals with families of univalent mappings from $\D$ to  increasing families of simply connected domains. In the applications it is sometimes useful to consider a reverse evolution. Namely, let $(D_t)_{t\geq 0}$ be a family of simply connected domains contained in the unit disc $\D$ and such that $D_t\subseteq D_s$ for all $0\leq s\leq t$. We also assume that $D_0=\D$.

A typical example is given by considering a Jordan arc $\gamma: [0,+\infty)\to \C$ such that $\gamma(0)\in \de \D$ and $\gamma((0,\infty))\subset \D$. In such a case $D_t=\D\setminus\gamma([0,t])$. If $0\in D_t$ for all $t$, one can consider a chain of univalent mappings $f_t: \D \to D_t$ normalized so that $f_t(0)=0$ and $f_t'(0)>0$. This is a sort of ``reverse classical Loewner evolution''.

Similarly, one can consider a ``reverse chordal Kufarev-Loewner evolution'', taking the upper half-plane model $\Ha$ and removing a growing Jordan arc $\gamma: (0,+\infty)\to \Ha$ such that $\gamma(0)=0$, considering the chain given by $f_t: \Ha \to \Ha \setminus \gamma([0,t])$ with the hydrodynamics normalization.

In this section we restrict ourselves to the case of the ``reverse classical Loewner evolution''. However, one can show the same procedure works for all generalizations of the classical Loewner equation (see also \cite{Co-Di-Gu2}).

For  $t\geq 0$, let  $f_t:\D \to D_t$ be a Riemann mapping normalized such that $f_t(0)=0$ and $f_t'(0)>0$. Assume that $D_t:=f_t(\D)$ be such that $D_t\subset D_s$ for $0\leq s\leq t$ and $f_0={\sf id}$. Moreover, $\D\setminus D_t$ is a Jordan arc $\gamma: [0,+\infty)\to \C$ such that $\gamma(0)\in \de \D$ and $\gamma((0,\infty))\subset \D$.

Let $\beta(t):=f_t'(0)$. By Schwarz' lemma, $\beta:[0,+\infty)\to \R^+$ is a decreasing function, $\beta(0)=1$. Let
\[
A:=\lim_{t\to \infty} \beta(t).
\]
Then $0\leq A<1$. Let $\sigma(s):=\beta^{-1}(e^{-s})$, for $s\in [0,-\ln A)$. We re-parameterize the Jordan arc as $\tilde{\gamma}(s):=\gamma(\sigma(s))$, for $s\in [0,-\ln A)$. With such a parametrization we have $f_t(0)=0, f_t'(0)=e^{-t}$ for $t\in [0,-\ln A)$.

Fix $T\in (0,-\ln A)$, and, for $t\in [0,T]$, let $\phi_t(z):=f_{T-t}(z)$. By definition, $\phi_s(\D)\subset \phi_t(\D)$ for all $0\leq s\leq t\leq T$ and $\phi_T={\sf id}$. Also, $\phi_t(0)=0, \phi_t'(0)=e^{t-T}$.

The family $(\v_t)$ is a ``classical Loewner chain'' as defined in Section \ref{sect-class}, except that $t\in [0,T]$ instead of taking values in $[0,+\infty)$. In any case, we can define the associated ``evolution family'' $\v_{s,t}:=f_t^{-1}\circ f_s$. It is easy to check that such a family satisfies the requirement of Definition \ref{def-class-ev}, taking $0\leq s\leq t\leq T$.

Then Theorem \ref{classical-Loewner} applies and  there exists $k_T:[0,T)\to \de \D$ a continuous function such that for all $t\in [0,T)$ and $z\in \D$ equation \eqref{classicalLoewner} holds with $k_T$ replacing $k$.

Then the family $(\phi_t)_{t\in [0,T]}$ satisfies  \eqref{PDE-class}, {\sl i.e.}
\[
\frac{\de \phi_t(z)}{\de t}=\phi_t'(z) z \frac{1+k_T(t)z}{1-k_T(t)z}.
\]
Now, taking into account that $\phi_t=f_{T-t}$, from the previous equation we obtain
\[
\frac{\de f_t(z)}{\de t}=-\frac{\de f_t(z)}{\de z} z \frac{1+k_T(T-t)z}{1-k_T(T-t)z}, \quad t\in [0,T).
\]
This yields that $k_T(T-t)=k_{T'}(T'-t)$ for all $t\leq \min\{T, T'\}$. Thus, setting $k(t):=k_T(T-t)$ whenever $t\in [0, T)$, we find
\begin{equation}\label{reverse}
\frac{\de f_t(z)}{\de t}=-\frac{\de f_t(z)}{\de z} z \frac{1+k(t)z}{1-k(t)z}, \quad t\in [0,-\ln A).
\end{equation}

Note that  \eqref{reverse} differs from \eqref{PDE-class} by a sign. Now, let
\[
g_t:=f_t^{-1}: D_t \to \D.
\]
Since $z=f_t(g_t(z))$ for all $z\in f_t(\D)=D_t$, differentiating in $t$ and taking into account \eqref{reverse}, we obtain
\[
\begin{split}
0&=\frac{\de f_t}{\de t}(g_t(z))+\frac{\de f_t}{\de z}(g_t(z))\frac{\de g_t(z)}{\de t}\\&=
-\frac{\de f_t}{\de z}(g_t(z)) z \frac{1+k(t)g_t(z)}{1-k(t)g_t(z)}+\frac{\de f_t}{\de z}(g_t(z))\frac{\de g_t(z)}{\de t}.
\end{split}
\]
Since $f_t$ is univalent, we get
\begin{equation}\label{reverse-inverse}
\frac{\de g_t(z)}{\de t}=z \frac{1+k(t)g_t(z)}{1-k(t)g_t(z)}
\end{equation}
for all $t\in [0, -\ln A)$ and $z\in D_t$.

Note that, given $z\in \D$, \eqref{reverse-inverse} holds for all $t\in [0,-\ln A)$ if $z\not\in \tilde{\gamma}([0,-\ln A))$. If $z=\tilde{\gamma}(t_0)$ then \eqref{reverse-inverse} holds for all $t\in [0, t_0)$.

Putting together the previous considerations we have

\begin{theorem}[Reverse classical radial Loewner evolution]
Let $\gamma:[0,M)\to \C$ be a Jordan arc such that $\gamma(0)\in \de \D$ and $\gamma((0,M))\subset \D$. Let $f_t:\D \to \D$, for $t\in [0,M)$, be a family of Riemann mappings such that $\D\setminus f_t(\D)=\gamma((0,t])$, $f_t(0)=0$ and $f_t'(0)=e^{-t}$. Let $g_t:=f_t^{-1}$. Then there exists a continuous function $k:[0, M)\to \de\D$ such that
\[
\frac{\de g_t(z)}{\de t}=z \frac{1+k(t)g_t(z)}{1-k(t)g_t(z)}
\]
for all $z\in \D$ and either for all $t\in [0, M)$ if $z\not\in \gamma([0,M))$, or for all $t\in [0,t_0)$ if $z=\gamma(t_0)$.
\end{theorem}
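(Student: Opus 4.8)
The plan is to reverse time in the classical radial Loewner chain and then differentiate the identity $f_t\circ g_t=\mathrm{id}$, exactly as carried out in the preceding discussion, but organized so as to make the passage from a fixed horizon $T$ to the full interval $[0,M)$ transparent. First I would pass from the given decreasing family $D_t=f_t(\D)$ to a family that fits Definition~\ref{def-class-ev}: using Schwarz' lemma one sees $\beta(t):=f_t'(0)$ is decreasing with $\beta(0)=1$, so with $A:=\lim_{t\to\infty}\beta(t)\in[0,1)$ and the reparametrization $\sigma(s):=\beta^{-1}(e^{-s})$ we may assume $f_t'(0)=e^{-t}$ on $[0,M)$ where $M:=-\ln A$ (note that in the statement $M$ is already this normalized horizon). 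Fix $T\in(0,M)$ and set $\phi_t:=f_{T-t}$ for $t\in[0,T]$; then $\phi_s(\D)\subset\phi_t(\D)$, $\phi_T=\mathrm{id}$, $\phi_t(0)=0$, $\phi_t'(0)=e^{t-T}$, so $(\phi_t)_{t\in[0,T]}$ is a classical Loewner chain on $[0,T]$ (the finite horizon causing no trouble) and its associated evolution family $\v_{s,t}=f_t^{-1}\circ f_s$ satisfies Definition~\ref{def-class-ev}.

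Next I would invoke Theorem~\ref{classical-Loewner}, applied to $(\phi_t)$: differentiating the functional equation $\phi_t\circ\v_{s,t}=\phi_s$ and using \eqref{classicalLoewner} yields the PDE \eqref{PDE-class} for $(\phi_t)$, i.e.
\[
\frac{\de\phi_t(z)}{\de t}=\phi_t'(z)\,z\,\frac{1+k_T(t)z}{1-k_T(t)z},\qquad t\in[0,T),
\]
for a continuous $k_T:[0,T)\to\de\D$. Substituting $\phi_t=f_{T-t}$ and relabeling the time variable flips the sign and gives
\[
\frac{\de f_t(z)}{\de t}=-\frac{\de f_t(z)}{\de z}\,z\,\frac{1+k_T(T-t)z}{1-k_T(T-t)z},\qquad t\in[0,T).
\]
The consistency step is that if $T<T'$ then the two finite-horizon driving terms must agree, $k_T(T-t)=k_{T'}(T'-t)$ for $t<\min\{T,T'\}$: this follows because both express the same derivative $\de_t f_t$, and the Herglotz datum determining a classical evolution family is unique (one reads off $k(t)$ from the radial limit, as in the final lines of the proof of Theorem~\ref{classical-Loewner}). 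Hence $k(t):=k_T(T-t)$ is well defined on all of $[0,M)$ and continuous there, and $(f_t)$ satisfies \eqref{reverse} on $[0,M)$.

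Finally I would differentiate $z=f_t(g_t(z))$ in $t$ for $z\in D_t$, which by \eqref{reverse} gives
\[
0=-\frac{\de f_t}{\de z}(g_t(z))\,z\,\frac{1+k(t)g_t(z)}{1-k(t)g_t(z)}+\frac{\de f_t}{\de z}(g_t(z))\,\frac{\de g_t(z)}{\de t};
\]
dividing by $f_t'(g_t(z))\neq 0$ (univalence) produces the asserted equation \eqref{reverse-inverse} for $g_t$. The only point needing care is the domain of validity in $t$ for a fixed $z\in\D$: the identity $g_t(z)\in\D$ and the computation above are legitimate precisely while $z\in D_t=\D\setminus\gamma((0,t])$. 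If $z\notin\gamma([0,M))$ this holds for all $t\in[0,M)$; if $z=\gamma(t_0)$ then $z\in D_t$ exactly for $t<t_0$, giving the equation on $[0,t_0)$, which is the stated dichotomy.

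The main obstacle I anticipate is the consistency of the finite-horizon driving terms $k_T$ — i.e. checking that Theorem~\ref{classical-Loewner} applied on different intervals $[0,T]$ produces mutually compatible data so that a single continuous $k:[0,M)\to\de\D$ emerges; once this is in hand, everything else is the sign-flip bookkeeping and the elementary differentiation of the inverse relation already sketched above.
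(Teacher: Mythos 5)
Your proposal is correct and follows essentially the same route as the paper: reparametrize so that $f_t'(0)=e^{-t}$, reverse time on a finite horizon $[0,T]$ to obtain a classical Loewner chain, apply Theorem \ref{classical-Loewner} to get a driving term $k_T$, verify the consistency $k_T(T-t)=k_{T'}(T'-t)$ so that a single continuous $k$ on $[0,M)$ emerges, and then differentiate $z=f_t(g_t(z))$ to pass to the inverse maps, with the same case analysis on the domain of validity in $t$. No gaps to report.
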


A similar argument as before applies to the Kufarev-Loewner chordal equation. In particular, we obtain the reverse evolution from \eqref{hydro-Low}. That is

\begin{theorem}[Reverse classical chordal Kufarev-Loewner evolution]
Let $\gamma:[0,M)\to \C$ be a Jordan arc such that $\gamma(0)=0$ and $\gamma((0,M))\subset \Ha$. Let $f_t:\Ha \to \Ha$, for $t\in [0,M)$, be a family of Riemann mappings such that $\Ha\setminus f_t(\Ha)=\gamma((0,t])$, $f_t(z)=z-\frac{2t}{z}+O\left(\frac{1}{z^2}\right)$. Let $g_t:=f_t^{-1}$. Then there exists a continuous function $k:[0, M)\to \R$ such that
\[
\frac{\partial g_t(z)}{\partial t}=\frac{2}{g_t(z)-k(t)},
\]
for all $z\in \Ha$ and either for all $t\in [0, M)$ if $z\not\in \gamma([0,M))$, or for all $t\in [0,t_0)$ if $z=\gamma(t_0)$.
\end{theorem}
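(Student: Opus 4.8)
The plan is to mirror, essentially word for word, the reverse radial argument above, with the chordal Kufarev--Loewner equation \eqref{hydro-Low} playing the role of Theorem~\ref{classical-Loewner}. The first step is to recognize that the hypotheses imposed on $(f_t)$ are exactly the ones under which \eqref{hydro-Low} was stated: since $\gamma(0)\in\partial\Ha$ one has $\Ha\setminus\gamma((0,t])=\Ha\setminus\gamma([0,t])$, the hydrodynamic normalization $f_t(z)=z-\frac{2t}{z}+O(1/z^2)$ singles $f_t$ out uniquely among Riemann maps onto that domain and forces $f_0={\sf id}$. Hence \eqref{hydro-Low} applies and supplies a continuous $k\colon[0,M)\to\R$; in its equivalent ``PDE form'' — the half-plane analogue of \eqref{PDE-class}, obtained by differentiating the functional relation between $f_t$ and the flow of its characteristic vector field — this says $\frac{\partial f_t}{\partial t}(z)=-\frac{\partial f_t}{\partial z}(z)\,\frac{2}{z-k(t)}$ for $z\in\Ha$. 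I would point out explicitly that, in contrast with the radial case, no preliminary time-reversal $\phi_t:=f_{T-t}$ is needed here, since \eqref{hydro-Low} is already formulated directly for a shrinking family of domains normalized by $f_0={\sf id}$.

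The second step is the inversion. Put $g_t:=f_t^{-1}\colon\Ha\setminus\gamma((0,t])\to\Ha$. Differentiating $f_t(g_t(z))=z$ in $t$, substituting the PDE for $f_t$ evaluated at the point $g_t(z)$, and dividing by $\frac{\partial f_t}{\partial z}(g_t(z))$, which is non-zero by univalence of $f_t$, one obtains $\frac{\partial g_t}{\partial t}(z)=\frac{2}{g_t(z)-k(t)}$ — exactly the way \eqref{reverse-inverse} was extracted from \eqref{reverse}. The asserted range of validity is then read off from the domain of definition of $g_t$: for a fixed $z\in\Ha$ the map $g_t$ is defined at $z$ if and only if $z\in f_t(\Ha)=\Ha\setminus\gamma((0,t])$, hence for every $t\in[0,M)$ when $z\notin\gamma([0,M))$ and for $t\in[0,t_0)$ when $z=\gamma(t_0)$.

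The step I expect to carry the real weight is not the computation but the appeal to \eqref{hydro-Low} itself: namely, that under the bare assumptions the chordal equation holds with a genuinely \emph{continuous}, real-valued driving term. This is the half-plane counterpart of the delicate point in Loewner's theorem where one argues that the arc $A_{s,t}$ shrinks to a single boundary point, and I would simply invoke it from the Kufarev--Loewner theory (e.g.\ \cite{Kuf1946,KSS}) rather than reprove it. Granting that, the residual technicalities — the joint regularity in $(t,z)$ needed to legitimize differentiating $f_t(g_t(z))=z$, the absolute continuity in $t$, and the bookkeeping around the absorption time $t_0$ — are dispatched exactly as in the classical radial discussion.
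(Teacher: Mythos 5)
Your overall architecture is the right one and is what the paper intends: the paper offers no separate proof of this theorem beyond the remark that ``a similar argument as before applies,'' and the steps you describe --- match the hypotheses to those of the chordal equation, pass to the PDE form for $f_t$, differentiate $f_t(g_t(z))=z$, and read off the interval of validity from the domain of $g_t$ --- are exactly that argument. Your observations that no time reversal $\phi_t=f_{T-t}$ is needed (the chordal setup already concerns a shrinking family with $f_0={\sf id}$) and that $\Ha\setminus\gamma((0,t])=\Ha\setminus\gamma([0,t])$ because $\gamma(0)=0\in\partial\Ha$ are both correct, as is the final bookkeeping on the swallowing time $t_0$.

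The one place where your write-up papers over a real issue is the word ``equivalent'' relating \eqref{hydro-Low} to the PDE $\frac{\partial f_t}{\partial t}(z)=-\frac{\partial f_t}{\partial z}(z)\,\frac{2}{z-k(t)}$. Read literally, \eqref{hydro-Low} says that $t\mapsto f_t(z)$ is an integral curve of the non-autonomous field $w\mapsto -2/(w-k(t))$, and differentiating $f_t(g_t(z))=z$ under \emph{that} hypothesis gives
\[
\frac{\partial g_t}{\partial t}(z)=\frac{2}{\bigl(z-k(t)\bigr)\,\frac{\partial f_t}{\partial z}(g_t(z))},
\]
which is not the asserted equation. The ODE form and the PDE form are genuinely different for non-constant $k$: imposing both simultaneously forces $(f_t(z)-k(t))^2=(z-k(t))^2-4t$, i.e.\ the vertical-slit maps, so they coincide only when the driving term is constant. (The literal ODE generates the ``reverse flow,'' whose complementary hulls are not increasing and hence are not of the form $\gamma((0,t])$ for a fixed arc.) What your proof actually needs from the Kufarev--Loewner theory is the statement that the hydrodynamically normalized Riemann maps onto $\Ha\setminus\gamma([0,t])$ satisfy the PDE with a continuous real driving term --- equivalently, that $g_t$ satisfies the chordal ODE, which is essentially the theorem to be proved. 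That statement is true and citable (it is the half-plane analogue of Theorem~\ref{classical-Loewner}, proved by the same shrinking-arc argument with the Poisson/Nevanlinna representation in $\Ha$ replacing the one used for $\log(\v_{s,t}(z)/z)$), and granting it, your inversion step is exactly the passage from \eqref{reverse} to \eqref{reverse-inverse}. But it is not obtained from \eqref{hydro-Low} as written by any formal manipulation, so the appeal to the literature must be made in the correct form rather than via a claimed equivalence.
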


\subsection{The Schramm-Loewner equation}

In 1999 Oded
Schramm [S] had the wonderful idea of replacing the  driving term of the classical Loewner equation
for single-slit maps with a weighted Brownian motion, inventing
the nowadays well known stochastic-Loewner equations, or
Schramm-Loewner's equations. In particular, the {\sl (chordal)
stochastic Loewner evolution} with parameter $k\geq 0$
(SLE$_k$) starting at a point $x\in \mathbb R$ is the random
family of univalent maps $(g_t)$ obtained from the reverse classical chordal Kufarev-Loewner equation replacing  the driving term $k(t)$ with $\sqrt{k} B_t$, where $B_t$ is a standard one
dimensional Brownian motion such that $\sqrt{k}B_0=x$. That is
\begin{equation*}
\frac{\partial g_t(z)}{\partial t}=\frac{2}{g_t(z)-\sqrt{k} B_t}, \qquad g_0(z)=z.
\end{equation*}

Similarly, one can define a radial stochastic Loewner
evolution starting from the reverse classical  radial Loewner equation replacing  the driving term $k(t)$ with $e^{-i\sqrt{k} B_t}$, {\sl i.e.}
\[
\frac{\de g_t(z)}{\de t}=z \frac{1+e^{-i\sqrt{k} B_t}g_t(z)}{1-e^{-i\sqrt{k} B_t}g_t(z)}, \qquad g_0(z)=z.
\]

The SLE$_k$ depends on the choice of the Brownian motion and it
comes in several flavours depending on the type of Brownian
motion exploited. For example, it might start at a fixed point
or start at a uniformly distributed point, or might have a
built in drift and so on. The parameter $k$ controls the rate
of diffusion of the Brownian motion and the behaviour of the
SLE$_k$ critically depends on the value of~$k$.

The SLE$_2$ corresponds to the loop-erased random walk and the
uniform spanning tree. The SLE$_{8/3}$ is conjectured to be the
scaling limit of self-avoiding random walks. The SLE$_3$ is
conjectured to be the limit of interfaces for the Ising model,
while the SLE$_4$ corresponds to the harmonic explorer and the
Gaussian free field. The SLE$_6$ was used by Lawler, Schramm
and Werner in 2001 [LSW1], [LSW2] to prove the conjecture of
Mandelbrot (1982) that the boundary of planar Brownian motion
has fractal dimension $4/3$. Moreover, Smirnov [Sm] proved that
SLE$_6$ is the scaling limit of critical site percolation on
the triangular lattice. This result follows from his celebrated
proof of Cardy's formula. We refer the reader to the very beautiful book of G. Lawler \cite{La} for more details.

\section{Semigroups and infinitesimal generators}

Looking at the classical radial Loewner equation \eqref{classicalLoewner} and the classical chordal Kufarev-Loewner equation \eqref{chordal}, one notices that there is a similitude between the two. Indeed, we can write both the equation in the form
\[
\frac{\de z(t)}{\de t}=G(z,t),
\]
with
\[
G(z,t)=(\tau-z)(1-\overline{\tau}z)p(z,t),
\]
where $\tau=0, 1$ and  $\Re p(z,t)>0$ for all $z\in \D$ and $t\geq 0$.

The reason for the previous formula is not at all by chance, but it reflects a very important feature of ``Herglotz vector fields''. In order to give a rough idea of what we are aiming, consider the case $\tau=0$ (the radial case). Fix $t=t_0\in [0,+\infty)$. Consider the holomorphic vector field $H(z):=G(z,t_0)$. Let $h(z):=|z|^2$. Then,
\begin{equation}\label{stringo}
dh_z(H(z))=2\Re \la H(z), z\ra =-|z|^2\Re p(z,t_0) \leq 0, \quad \forall z\in \D.
\end{equation}
This Lyapunov type inequality has a deep geometrical meaning. Indeed, \eqref{stringo} tells that $H$ points toward the center of the level sets of $h$, which are concentric circles centered at $0$. For each $z_0\in \D$, consider then the Cauchy problem
\begin{equation}\label{Cauchy}
\begin{cases}
\frac{d w(t)}{dt}=H(w(t)),\\
w(0)=z_0
\end{cases}
\end{equation}
and let $w^{z_0}:[0,\delta)\to \D$ be the maximal solution (such a solution can propagate also in the ``past'', but we just consider the ``future'' time). Since $H$ points inward with respect to all circles centered at $0$, the flow $t\mapsto w^{z_0}(t)$ cannot escape from the circle $h(z)=h(z_0)$. Therefore, the flow is defined for all future times, namely, $\delta=+\infty$. This holds for all $z_0\in \D$.

Hence, the Herglotz vector field $G(z,t_0)$ has the feature to be $\R^+$-semicomplete for all fixed $t_0$.

Let $H$ be a $\R^+$-semicomplete holomorphic vector field on $\D$ and let $[0,+\infty)\ni\mapsto w^z(t)\in \D$ be the solution of \eqref{Cauchy}.
By the holomorphic flow-box theorem, the map
\[
[0,+\infty)\times \D \mapsto \phi_t(z):=w^{z}(t)\in \D
\]
is real analytic, and for all fixed $t$, the map $z\mapsto \phi_t(z)$ is holomorphic. By definition $\phi_0={\sf id}$ and by the uniqueness of solutions of \eqref{Cauchy},
\[
\phi_{t+s}=\phi_t \circ \phi_s=\phi_s\circ \phi_t \quad \forall s,t\geq 0.
\]
In other words, $(\phi_t)$ is a continuous morphism of semigroups between $(\R_{\geq 0}, +)$ endowed with the Euclidean topology and the semigroup of holomorphic self-maps of the unit disc $({\sf Hol}(\D,\D), \circ)$ endowed with the topology of uniform convergence on compacta.

Recall that a holomorphic vector field $H$ on a complex manifold $M$ is a section of the holomorphic tangent bundle $TM$. In case $M$ is a domain in $\C^n$ (for instance the unit disc in $\C$), then $TM\simeq M\times \C^n$ and thus we can interpret $H$ as a holomorphic function from $M$ to $\C^n$. With this in mind, we give the following definition:

\begin{definition}
Let $M$ be a complex manifold. A holomorphic vector field $H: M \to TM$ is  an {\sl infinitesimal generator} if for each $z_0\in M$, its flow starting from $z_0$ is defined for all $t\geq 0$.
\end{definition}

Also, we define:

\begin{definition}
Let $M$ be a complex manifold. A {\sl continuous semigroup of holomorphic self-maps of $M$}, $(\phi_t)_{t\geq 0}$, is a continuous morphism of semigroups between $(\R_{\geq 0}, +)$ endowed with the Euclidean topology and the semigroup of holomorphic self-maps of $M$  endowed with the topology of uniform convergence on compacta.
\end{definition}

It can be shown that if $(\phi_t)$ is a semigroup, then for each $t\geq 0$, the map $z\mapsto \phi_t(z)$ is univalent.

Let $M$ be a complex manifold. By the holomorphic flow-box theorem, for an infinitesimal generator $H$ on $M$ there exists a unique continuous semigroups $(\phi_t)_{t\geq 0}$ of holomorphic self-maps of $M$ such that
\begin{equation}\label{semigruppo}
\frac{\de \phi_t(z)}{\de t}=H(\phi_t(z)).
\end{equation}
Conversely, given a continuous semigroup of holomorphic self-maps of $M$, there exists a unique infinitesimal generator $H$ on $M$ such that \eqref{semigruppo} holds (\cite{Berkson-Porta}, see also {\sl e.g.}, \cite{Abate}, \cite{Reich-Shoikhet}).

Much has been done in the theory of semigroups, see \cite{Reich-Shoikhet} for a very good recent account. Here we content ourselves to examine the theory we need for our aim.

As we saw before, a classical Herglotz vector field $G(z,t)$ (as defined in Definition \ref{def-class-Herglotz}) has the property that for all $t\geq 0$, the holomorphic vector field $z\mapsto G(z,t)$ is an infinitesimal generator, and one might suspect that this is the right choice for a workable definition of a general Herglotz vector field.  Therefore, it is fundamental to characterize which holomorphic vector fields are infinitesimal generators.

The previous argument  with the function $h$, gives a basic rough idea of the way one can characterize infinitesimal generators.
Before going ahead, we need to recall some few facts about the so-called ``invariant distances''. We refer the reader to \cite{Kob} and \cite{Abate} for details.

Let $\omega:\D\times \D \to \R^+$ denote the Poincar\'e distance on $\D$. Recall that
\[
\omega(z,w)=\frac{1}{2}\log \frac{1+|T_z(w)|}{1-|T_z(w)|},
\]
where $T_z(w)=\frac{z-w}{1-\overline{z}w}$. Essentially by a re-interpretation of the Schwarz lemma, the Poincar\'e distance has the property of being shrunk by holomorphic self-maps of the unit disc, namely, if $f:\D \to \D$ is holomorphic, then
\[
\omega(f(z), f(w))\leq \omega(z,w)\quad \forall z,w\in \D.
\]
Moreover, there is equality for some $z\neq w$---and hence for all---if and only if $f$ is an automorphism of $\D$. The Poincar\'e distance is of class $C^\infty$ outside the diagonal. Take a semigroup $(\phi_t)$ of holomorphic self-maps of $\D$ generated by the infinitesimal generator $H$. Then, for $z\neq w$, the function
\begin{equation}\label{function1}
t\mapsto \omega(\phi_t(z), \phi_t(w))
\end{equation}
is differentiable (because $\phi_t(z)\neq \phi_t(w)$ for all $t\geq 0$ being the map injective) and decreasing, since
\[
\begin{split}
\omega(\phi_t(z), \phi_t(w))&=\omega(\phi_{t-\epsilon+\epsilon}(z), \phi_{t-\epsilon+\epsilon}(w))
\\&=\omega(\phi_\epsilon(\phi_{t-\epsilon}(z)), \phi_\epsilon(\phi_{t-\epsilon}(w))) \leq \omega(\phi_{t-\epsilon}(z), \phi_{t-\epsilon}(w)).
\end{split}
\]
Differentiating in $t$ at $t=0$ we obtain thus
\begin{equation}\label{contrae-disco}
d\omega_{(z,w)}\cdot(H(z), H(w))\leq 0, \quad \forall z,w\in \D, z\neq w.
\end{equation}

Note that if $\phi_t(0)=0$ for all $t\geq 0$, then $H(0)=0$ and the previous equation for $w=0$ is equivalent to $dh_z (H(z))\leq 0$ (where, as before, $h(z)=|z|^2$), that is,
\begin{equation}\label{Remeno}
\Re \la H(z), z\ra \leq 0.
\end{equation}
However, being $H(0)=0$, it follows that $H(z)=-zp(z)$ for some holomorphic function $p:\D \to \C$, and \eqref{Remeno} implies that $p:\D \to \{w\in \C : \Re w \geq 0\}$.

Condition \eqref{contrae-disco} is also necessary to ensure that $H$ is an infinitesimal generator (see \cite{BCM}). The geometric reason of such is that such equation means that Poincar\'e discs are shrunk by the flow of $H$, hence the flow starting at any given point of $\D$ cannot reach the boundary in a finite time. Analytically, \eqref{contrae-disco} translates saying that the function \eqref{function1} (where $t\mapsto \phi_t(z)$ denotes here  the flow starting at $z$) is decreasing in time, and therefore the vector field $H$ is semicomplete. In fact, starting from \eqref{contrae-disco} one can derive  useful equivalent analytical characterizations of infinitesimal generators in the unit disc---historically, such characterizations have been derived directly without using formula \eqref{contrae-disco}, which was discovered in \cite{BCM}.

\begin{theorem}[Characterization of infinitesimal generators in the unit disc]\label{Berkson-P}
Let $H: \D \to \C$ be holomorphic. Then the following are equivalent:
\begin{enumerate}
  \item $H$ is an infinitesimal generator,
  \item (\cite{BCM}) $d\omega_{(z,w)}\cdot(H(z), H(w))\leq 0, \quad \forall z,w\in \D, z\neq w$,
  \item (\cite{ARS}) there exist $a\in \C$ and $q:\D \to \{w\in \C: \Re w\geq 0\}$ holomorphic such that for all $z \in \D$
  \[H(z)=a-\overline{a}z^2-z q(z),\]
  \item (Berkson-Porta's formula \cite{Berkson-Porta})  there exist $\tau\in \oD$ and $p:\D \to \{w\in \C: \Re w\geq 0\}$ holomorphic  such that for all $z \in \D$
  \[H(z)=(\tau-z)(1-\overline{\tau}z) p(z).\]
\end{enumerate}
\end{theorem}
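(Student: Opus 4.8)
The plan is to prove the equivalence of the four conditions by establishing a cycle of implications, using the analytic facts already assembled in the excerpt. I would organize it as (1)$\Rightarrow$(2)$\Rightarrow$(3)$\Leftrightarrow$(4)$\Rightarrow$(1), since (3) and (4) are purely algebraic reparametrizations of one another and the "hard analytic content" is concentrated in (1)$\Rightarrow$(2) and (4)$\Rightarrow$(1).

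For (1)$\Rightarrow$(2), the argument is essentially the one displayed just before the statement: if $H$ generates the semigroup $(\phi_t)$, then by the Schwarz--Pick contraction property $\omega(\phi_\epsilon(z_1),\phi_\epsilon(z_2))\leq\omega(z_1,z_2)$, so the function $t\mapsto\omega(\phi_t(z),\phi_t(w))$ is nonincreasing; since the map is injective the arguments stay off the diagonal, the Poincar\'e distance is $C^\infty$ there, $t\mapsto\phi_t(z)$ is differentiable with $\de_t\phi_t(z)|_{t=0}=H(z)$, and differentiating at $t=0$ gives $d\omega_{(z,w)}\cdot(H(z),H(w))\leq 0$. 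For (2)$\Rightarrow$(3): here I would compute $d\omega_{(z,w)}$ explicitly. Writing the Poincar\'e metric and taking $w\to z$ along the diagonal (after a suitable renormalization, since $\omega$ degenerates there) converts the inequality (2) into the statement that the real $(1,1)$-form coming from $H$ is nonnegative; concretely, one checks that (2) forces $z\mapsto H(z)$ to satisfy $\Re\bigl(\overline{H(z)}\,n(z)\bigr)\le 0$ along the inward conormal of each horocycle, which after a Herglotz-type representation yields $q(z):=-\,(H(z)-a+\overline a z^2)/z$ having nonnegative real part, with $a:=H(0)$. The cleanest route is probably: use (2) with $w$ fixed at $0$ and with $w$ ranging over all of $\D$, obtaining a family of Julia--Wolff--Carath\'eodory-type inequalities that pin down the form of $H$; alternatively one can cite \cite{ARS} here, as the statement already does.

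The equivalence (3)$\Leftrightarrow$(4) is a direct computation: given the Berkson--Porta form $H(z)=(\tau-z)(1-\overline\tau z)p(z)$, expand to get $H(z)=\tau p(z)-(1+|\tau|^2)z\,p(z)+\overline\tau z^2 p(z)$; setting $a:=\tau p(0)$ and $q(z):=(1+|\tau|^2)p(z)-\overline\tau z p(z)-\overline{a}\,\cdot(\text{correction})$ and matching against $H(z)=a-\overline a z^2-zq(z)$ identifies the data, the main point being that $\Re q\geq 0$ iff $\Re p\geq 0$ because the map $p\mapsto q$ is, up to the additive/linear terms that vanish appropriately at $0$, multiplication by a positive constant plus an imaginary-axis-preserving perturbation; conversely from (3) one recovers $\tau$ as the boundary fixed point (the unique $\tau\in\oD$ with $a-\overline a\tau^2=0$ that is compatible with $q$). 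Finally, for (4)$\Rightarrow$(1): given $H(z)=(\tau-z)(1-\overline\tau z)p(z)$ with $\Re p\geq 0$, I would run exactly the Lyapunov argument sketched in the text for the radial case but with the function $h(z)=\omega(z,\tau)$ (or, if $\tau\in\de\D$, a horocyclic function $h(z)=\frac{|\tau-z|^2}{1-|z|^2}$): one checks $dh_z(H(z))\le 0$ using $\Re p\ge 0$, which confines the flow to sublevel sets of $h$, all of which are relatively compact in $\D$ (when $\tau\in\D$) or are horodiscs tangent at $\tau$ (when $\tau\in\de\D$) from which the flow cannot escape in finite forward time; hence the flow is defined for all $t\ge 0$ and $H$ is an infinitesimal generator.

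The main obstacle is the implication (2)$\Rightarrow$(3) (equivalently, the necessity direction in Berkson--Porta). The subtlety is that the Poincar\'e distance $\omega$ is not differentiable on the diagonal, so one cannot naively set $w=z$; one must either take a careful limit $w\to z$ of the normalized inequality, or — more robustly — exploit (2) for $w$ near the boundary to extract boundary regularity of $H$ (a Julia-type lemma showing $H$ extends suitably to $\de\D$ with controlled boundary behavior), and only then invoke a Herglotz representation to produce $p$ with $\Re p\ge 0$ and read off $\tau$ as the Denjoy--Wolff-type fixed point. Since the excerpt attributes (3) to \cite{ARS} and (2) to \cite{BCM}, in the write-up I would state the two nontrivial implications as quoted from those references and give full details only for the elementary links (3)$\Leftrightarrow$(4) and the Lyapunov-function proof of (4)$\Rightarrow$(1).
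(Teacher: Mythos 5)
Your overall cycle (1)$\Rightarrow$(2)$\Rightarrow$(3)$\Leftrightarrow$(4)$\Rightarrow$(1) would prove the theorem if every link held, and your treatment of (1)$\Rightarrow$(2) matches the paper's. But two links have genuine problems. The most serious is your claim that (3)$\Leftrightarrow$(4) is a ``purely algebraic reparametrization'' verifiable by direct computation. It is not: expanding $H(z)=(\tau-z)(1-\overline{\tau}z)p(z)$ and setting $a=\tau p(0)$ forces
\[
q(z)=-\tau\,\frac{p(z)-p(0)}{z}+(1+|\tau|^2)p(z)-\overline{\tau}z\bigl(p(z)+\overline{p(0)}\bigr),
\]
and $\Re q\geq 0$ is in no way evident from this formula (it requires the Herglotz integral representation of $p$, i.e.\ genuine analysis, and even then only gives one direction). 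Your recipe for the converse --- recovering $\tau$ as ``the root of $a-\overline{a}\tau^2=0$ compatible with $q$'' --- is simply wrong: for $a\neq 0$ that equation forces $|\tau|=1$, whereas e.g.\ $H(z)=(\tfrac12-z)(1-\tfrac12 z)$ has $\tau=\tfrac12\in\D$ and $a=\tfrac12$ with $a-\overline{a}\tau^2=\tfrac38\neq 0$. The point $\tau$ is the Denjoy--Wolff point of the generated semigroup, a dynamical object not determined by $a$ alone; the paper explicitly states that the equivalence with (4) for $\tau\neq 0$ ``relies on dynamical properties of the semigroups'' and omits it, so treating it as the trivial leg of the cycle inverts where the difficulty lies.

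The second problem is that your hard link (2)$\Rightarrow$(3) is left to a citation, whereas the paper actually supplies the idea you are missing: from the infinitesimal-metric characterization $\Re[\,2\overline{z}H(z)+(1-|z|^2)H'(z)\,]\leq 0$ one sees that the infinitesimal generators form a real cone with vertex $0$; the fields $g_a(z)=a-\overline{a}z^2$ generate groups of automorphisms, so $\pm g_a$ are both generators, and hence $H$ is a generator if and only if $H-g_{H(0)}$ is. Since $H-g_{H(0)}$ vanishes at the origin, this reduces (3) to the already-settled radial case $H(z)-g_a(z)=-zq(z)$ with $\Re q\geq 0$, with no need for a limit $w\to z$ in $d\omega$ or a Julia-type boundary analysis. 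Finally, a smaller gap in your (4)$\Rightarrow$(1): when $\tau\in\de\D$ the sublevel sets of $h(z)=|\tau-z|^2/(1-|z|^2)$ are horodiscs, which are \emph{not} relatively compact in $\D$, so confining the flow to them does not by itself give forward completeness --- you must still rule out that a trajectory reaches $\tau$ in finite time.
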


\begin{proof}[Sketch of the Proof] We already saw the equivalence between (1) and (2). Now, if $H(0)=0$ the previous discussion shows that $H(z)=-zp(z)$ for some holomorphic function $p:\D \to \{w\in \C: \Re w\geq 0\}$. Hence (3) and (4) holds with $a=\tau=0$ and they are equivalent to (1) and (2) in such a case.

Now, using the infinitesimal Poincar\'e metric, given by $ds^2=\frac{|dz|^2}{(1-|z|^2)^2}$, and the shrinking properties of holomorphic self-maps of the unit disc and arguing similarly as above (see also \cite[Thm. 1.4.14]{Abate}), one can show that $H$ is an infinitesimal generator if and only if
\[
\Re[ 2 \overline{z}H(z)+(1-|z|^2)H'(z)]\leq 0 \quad \forall z\in \D.
\]
As a consequence, the set of infinitesimal generators is a real cone with vertex $0$. Now, given $a\in \C$, a direct computation shows that the holomorphic vector field $\D \ni z\mapsto g_a(z)=a-\overline{a}z^2$ is a generator of a group of automorphisms of $\D$. Namely, both $g_a$ and $-g_a$ are infinitesimal generator. Therefore, a holomorphic vector field $H$ is an infinitesimal generator if and only if $H-g_a$ is an infinitesimal generator for all $a\in \C$. Hence, setting $a:=H(0)$, it follows that a holomorphic vector field $H$ is an infinitesimal generator if and only if
\[
\Re \la H(z)-g_a(z), z\ra \leq 0.
\]
This implies that (3) is equivalent to (1) and (2). The equivalence with (4) in case $\tau\neq 0$ relies on dynamical properties of the semigroups which we are not going to discuss in here, and therefore it is omitted.
\end{proof}

\begin{remark}
Berkson-Porta's formula (4) relates the infinitesimal generator $H$ with the dynamical properties of the associated semigroup $(\phi_t)$. In particular, the point $\tau$ is (except in the case of a group of rotation) the attractive fixed point of the semigroup, {\sl i.e.}, $\phi_t(z)\to \tau$ as $t\to\infty$ for all $z\in \D$.
\end{remark}

\subsection{Higher dimension} In higher dimension one can replace the Poincar\'e distance with the Kobayashi distance. First, we recall the definition of Kobayashi distance (see \cite{Kob} for details and properties). Let $M$ be a complex manifold and let $z,w\in M$. A {\sl chain of analytic discs} between $z$ and $w$ is a finite family of holomorphic mappings $f_j:\D \to M$, $j=1,\ldots, m$ and points $t_j\in (0,1)$ such that
\[
f_1(0)=z, f_1(t_1)=f_2(0),\ldots, f_{m-1}(t_{m-1})=f_m(0), f_m(t_m)=w.
\]
We denote by $\mathcal C_{z,w}$ the set of all chains of analytic discs joining $z$ to $w$. Let $L\in \mathcal C_{z,w}$. The {\sl length of $L$}, denoted by $\ell (L)$ is given by
\[
\ell (L):=\sum_{j=1}^m \omega (0, t_j)=\sum_{j=1}^m \frac{1}{2}\log \frac{1+t_j}{1-t_j}.
\]
We define the {\sl Kobayashi (pseudo)distance $k_M(z,w)$} as follows:
\[
k_M(z,w):=\inf_{L\in \mathcal C_{z,w}} \ell(L).
\]
If $M$ is connected, then $k_M(z,w)<+\infty$ for all $z,w\in M$. Moreover, by construction, it satisfies the triangular inequality. However, it might be that $k_M(z,w)=0$ even if $z\neq w$ (a simple example is represented by $M=\C$, where $k_\C\equiv 0$). In the unit disc, $k_\D\equiv \omega$.

\begin{definition}
A complex manifold $M$ is said to be {\sl (Kobayashi) hyperbolic} if $k_M(z,w)>0$ for all $z,w\in M$ such that $z\neq w$. Moreover, $M$ is said {\sl complete hyperbolic} if $k_M$ is complete.
\end{definition}

Important examples of complete hyperbolic manifolds are given by bounded convex domains in $\C^n$.

The main property of the Kobayashi distance is the following: let $M,N$ be two complex manifolds and let $f: M \to N$ be holomorphic. Then for all $z,w\in M$ it holds
\[
k_N(f(z), f(w))\leq k_M(z,w).
\]

It can be proved that if $M$ is complete hyperbolic, then $k_M$ is Lipschitz continuous (see \cite{AB}). If $M$ is a bounded strongly convex domain in $\C^n$ with smooth boundary, L. Lempert (see, {\sl e.g.} \cite{Kob}) proved that the Kobayashi distance is of class $C^\infty$ outside the diagonal. In any case, even if $k_m$ is not smooth, one can consider the differential $dk_M$ as the Dini-derivative of $k_m$, which coincides with the usual differential at almost every point in $M\times M$.

The following characterization of infinitesimal generators is proved for strongly convex domains in \cite{BCM}, and in general in \cite{AB}:

\begin{theorem}\label{autonomo}
Let $M$ be a complete hyperbolic complex manifold and let $H$ be an holomorphic vector field on $M$. Then the following are equivalent.
\begin{enumerate}
\item $H$ is an infinitesimal generator,
\item For all $z, w\in M$ with $z\neq w$
it holds $$(dk_M)_{(z,w)}\cdot (H(z),H(w))\leq 0.$$
\end{enumerate}
\end{theorem}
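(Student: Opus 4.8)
The plan is to mimic, in the higher-dimensional complete hyperbolic setting, the argument already carried out in the unit disc (the step where \eqref{contrae-disco} was derived from the contracting property of the Poincar\'e distance), and then to supply the reverse implication via an integration/escape argument. The implication (1) $\Rightarrow$ (2) is the easy direction: if $H$ is an infinitesimal generator it generates a semigroup $(\phi_t)_{t\geq 0}$ of holomorphic self-maps of $M$ with $\frac{\de}{\de t}\phi_t = H\circ \phi_t$; by the distance-decreasing property of the Kobayashi distance under the holomorphic maps $\phi_t$, for fixed $z\neq w$ the function $t\mapsto k_M(\phi_t(z),\phi_t(w))$ is nonincreasing, exactly as in the semigroup computation $\omega(\phi_\epsilon(\phi_{t-\epsilon}(z)),\phi_\epsilon(\phi_{t-\epsilon}(w)))\leq \omega(\phi_{t-\epsilon}(z),\phi_{t-\epsilon}(w))$ reproduced above. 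Differentiating at $t=0$ — using that $k_M$ is Lipschitz (hence differentiable a.e. by Rademacher, and one works with the Dini derivative where needed) and that $(t,z)\mapsto\phi_t(z)$ is smooth — yields $(dk_M)_{(z,w)}\cdot(H(z),H(w))\leq 0$. One should be slightly careful here: the composition of a smooth curve with the merely Lipschitz function $k_M$ need only be differentiable a.e., so the clean statement is that the upper Dini derivative of $t\mapsto k_M(\phi_t(z),\phi_t(w))$ at $0$ is $\leq 0$, which is precisely $(dk_M)_{(z,w)}\cdot(H(z),H(w))\leq 0$ interpreted as a Dini derivative, matching the convention announced just before the theorem.

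For the converse (2) $\Rightarrow$ (1), the content is that the flow of $H$ cannot escape $M$ in finite positive time. Let $t\mapsto w^{z}(t)$ be the maximal solution of $\dot w = H(w)$, $w(0)=z$, defined on $[0,\delta_z)$. Fix an auxiliary point $z_0\in M$ (with $z_0\neq z$, say) and consider $\psi(t):=k_M(w^{z}(t),z_0)$. The hypothesis (2), applied with the second slot frozen — i.e. with the constant curve at $z_0$, whose ``velocity'' is $0$ — gives that the Dini derivative of $\psi$ is $(dk_M)_{(w^z(t),z_0)}\cdot(H(w^z(t)),0)\leq 0$, so $\psi$ is nonincreasing; hence $k_M(w^{z}(t),z_0)\leq k_M(z,z_0)$ for all $t\in[0,\delta_z)$. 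Since $M$ is complete hyperbolic, closed Kobayashi balls are compact, so the trajectory stays in a fixed compact subset $K=\overline{B_{k_M}(z_0, k_M(z,z_0))}$ of $M$ for all $t<\delta_z$. A solution of an ODE with smooth (holomorphic) right-hand side that remains in a compact set cannot blow up in finite time, so $\delta_z=+\infty$. This holds for every $z\in M$, which is exactly the definition of $H$ being an infinitesimal generator. The one subtlety is whether (2), stated for $z\neq w$ both moving, legitimately specializes to the ``one point frozen'' inequality: this is handled by approximating the constant curve at $z_0$ by short genuine flow pieces, or equivalently by noting that the inequality in (2) passes to the limit as $w\to z_0$ along $\phi_s(z_0)$, $s\to 0^+$; alternatively one invokes the cited references \cite{BCM}, \cite{AB} for this reduction.

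The main obstacle — and the place where one genuinely needs the references rather than a soft argument — is the regularity bookkeeping around $k_M$: it is only Lipschitz, not $C^1$, for general complete hyperbolic $M$ (smoothness off the diagonal is Lempert's theorem, available only for bounded strongly convex domains with smooth boundary), so ``differentiating $k_M$ along the flow'' must be done in the Dini sense, and one must check that the Dini derivative of $t\mapsto k_M(\phi_t(z),\phi_t(w))$ is controlled by $(dk_M)_{(z,w)}$ evaluated in the Dini sense on the tangent vector $(H(z),H(w))$. This is routine but not entirely formal: it uses that along a fixed smooth curve the relevant one-sided difference quotients of a Lipschitz function behave well, together with the semigroup inequality to get monotonicity ``for free'' on the $(1)\Rightarrow(2)$ side and the compactness/no-blow-up argument on the $(2)\Rightarrow(1)$ side. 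For a bounded strongly convex domain with smooth boundary everything is $C^\infty$ off the diagonal and the proof is literally the unit-disc computation; the general case is exactly the theorem of \cite{AB} and I would cite it for the delicate measure-theoretic points while giving the geometric skeleton above in full.
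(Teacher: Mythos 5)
The paper itself does not prove Theorem \ref{autonomo}; it only cites \cite{BCM} and \cite{AB}, so your proposal must be judged on its own. Your direction (1) $\Rightarrow$ (2) is fine and is exactly the computation the paper carries out in the unit disc to derive \eqref{contrae-disco}, with the Dini-derivative caveat correctly noted.

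The direction (2) $\Rightarrow$ (1) contains a genuine error. Hypothesis (2) bounds the pairing of $dk_M$ against the vector $(H(z),H(w))$, i.e.\ against \emph{both} points moving along the flow; it says nothing about the pairing against $(H(z),0)$. Your ``freeze the second slot'' step therefore does not follow from (2), and the proposed repairs do not help: a short genuine flow piece through $z_0$ has velocity $H(z_0)$, not $0$, and letting $w\to z_0$ along $\phi_s(z_0)$ again produces the pair $(H(z),H(z_0))$. Worse, the conclusion you would draw --- that $t\mapsto k_M(w^z(t),z_0)$ is nonincreasing, so trajectories stay in a compact Kobayashi ball --- is simply false for infinitesimal generators in general: take $M=\D$ and $H(z)=1-z^2$, which generates a group of hyperbolic automorphisms with $\phi_t(0)\to 1$ and $\omega(0,\phi_t(0))\to\infty$. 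So the Lyapunov function you chose cannot work. The correct quantity to monitor is the Kobayashi distance between \emph{two trajectories of the same flow}: for fixed $\epsilon>0$ the curve $t\mapsto\bigl(w^z(t),w^z(t+\epsilon)\bigr)$ consists of two solutions (one a time-shift of the other), so (2) integrates to $k_M\bigl(w^z(t),w^z(t+\epsilon)\bigr)\leq k_M\bigl(z,w^z(\epsilon)\bigr)$ on the maximal interval $[0,\delta)$. Taking $\epsilon=t'-t$ shows $t\mapsto w^z(t)$ is $k_M$-Cauchy as $t\to\delta^-$; completeness of $k_M$ then gives a limit point in $M$, the local existence theorem extends the solution past $\delta$, and maximality forces $\delta=+\infty$. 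No compactness of Kobayashi balls is needed, only completeness. With this substitution (and the Dini-derivative bookkeeping you already describe, which is where \cite{AB} is genuinely needed), the argument goes through.
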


\section{$L^d$-Herglotz vector fields and Evolution families}

In the previous section we saw that the classical Herglotz vector fields which appear in the Loewner equation have the property to be infinitesimal generators for all fixed times. We will exploit such a fact to define a general family of Herglotz vector fields. As a matter of notation, if $M$ is a complex manifold, we let  $\|\cdot \|$ be a Hermitian metric on $TM$ and $d_M$ the corresponding integrated distance.

\begin{definition}
\label{Her-vec-man} Let $M$ be a complex manifold. A \textit{weak holomorphic vector field of
order $d\geq 1$} on $M$ is a mapping $G:M\times \R^+\to
TM$ with the following properties:
\begin{itemize}
\item[(i)] The mapping $G(z,\cdot)$ is measurable on $\R^+$ for all
$z\in M$.
\item[(ii)] The mapping $G(\cdot,t)$ is a holomorphic vector field on $M$ for all $t\in \R^+$.
\item[(iii)] For any compact set $K\subset M$ and all $T>0$, there exists
a function $C_{K,T}\in L^d([0,T],\mathbb{R}^+)$ such that
$$\|G(z,t)\|\leq C_{K,T}(t),\quad z\in K, \mbox{ a.e.}\ t\in [0,T].$$
\end{itemize}

A \textit{Herglotz vector field of order $d\geq 1$} is a weak holomorphic vector field $G(z,t)$  of order $d$ with the property that
$M\ni z\mapsto G(z,t)$ is an infinitesimal generator
for almost all $t\in [0,+\infty)$.
\end{definition}
\begin{remark}
If $M$ is complete hyperbolic, then  a weak holomorphic vector field $G(z,t)$  of order $d$  if a Herglotz vector field
of order $d$ if and only if
\begin{equation}\label{herglotz_def}
(dk_M)_{(z,w)}\cdot(G(z,t),G(w,t))\leq 0,\quad z,w\in M, z\neq w,\ \mbox{ a.e. }
t\geq 0.
\end{equation}
This is proved in \cite{BCM}  for strongly convex domains, and in \cite{AB} for the general case.
\end{remark}

Using the so-called distortion theorem for holomorphic mappings $p:\D \to \{w\in \C: \Re w> 0\}$ and the Berkson-Porta formula in Theorem \ref{Berkson-P}, it can be proved that a classical Herglotz vector field as in the sense of Definition \ref{def-class-Herglotz} or given as in the Kufarev-Loewner equation \eqref{chordal}, is a Herglotz vector field of order $\infty$ in the unit disc in the sense of the previous definition.

Herglotz vector fields in the unit disc can be
decomposed by means of Herglotz functions (and this the reason
for the name). We begin with the following definition:

\begin{definition}
Let $d\in [1,+\infty]$. A {\sl Herglotz function of order $d$}
is a function $p:\mathbb{D}\times\lbrack0,+\infty
)\mapsto\mathbb{C}$ with the following properties:

\begin{enumerate}
\item For all $z\in\mathbb{D},$ the function $\lbrack0,+\infty
)\ni t \mapsto p(z,t)\in\mathbb{C}$ belongs to
$L_{loc}^{d}([0,+\infty),\mathbb{C})$;

\item For all $t\in\lbrack0,+\infty),$ the function
$\mathbb{D}\ni z \mapsto p(z,t)\in\mathbb{C}$ is holomorphic;

\item For all $z\in\mathbb{D}$ and for all $t\in\lbrack0,+\infty),$ we
have $\Re p(z,t)\geq0.$
\end{enumerate}
\end{definition}

Then we have the following result which, using the Berkson-Porta formula, gives a general form of the classical Herglotz vector fields:

\begin{theorem}\cite{Br-Co-Di-EF1}Let
$\tau:[0,+\infty)\rightarrow\overline {\mathbb{D}}$ be a
measurable function and let $p:\D\times [0,+\infty)\to \C$ be a
Herglotz function of order $d\in [1,+\infty)$. Then the map
$G_{\tau,p}:\mathbb{D}\times\lbrack
0,+\infty)\rightarrow\mathbb{C}$ given by
\begin{equation*}
G_{\tau,p}(z,t)=(z-\tau(t))(\overline{\tau(t)}z-1)p(z,t),
\end{equation*}
for all $z\in\mathbb{D}$ and for all $t\in\lbrack0,+\infty),$
is a Herglotz vector field  of order $d$ on the unit disc.

Conversely, if $G:\D\times [0,+\infty)\to \C$ is a Herglotz
vector field of order $d\in [1,+\infty)$ on the unit disc, then
there exist a measurable function
$\tau:[0,+\infty)\rightarrow\overline {\mathbb{D}}$ and a
Herglotz function $p:\D\times [0,+\infty)\to \C$  of order $d$
such that $G(z,t)=G_{\tau, p}(z,t)$ for almost every $t\in
[0,+\infty)$ and all $z\in \D$.

Moreover, if $\tilde{\tau}:[0,+\infty)\rightarrow\overline
{\mathbb{D}}$ is another measurable function and
$\tilde{p}:\D\times [0,+\infty)\to \C$ is another Herglotz
function of order $d$ such that $G=G_{\tilde{\tau}, \tilde{p}}$
for almost every $t\in [0,+\infty)$ then
$p(z,t)=\tilde{p}(z,t)$ for almost every $t\in [0,+\infty)$ and
all $z\in \D$ and $\tau(t)=\tilde{\tau}(t)$ for almost all
$t\in [0,+\infty)$ such that $G(\cdot, t)\not\equiv 0$.
\end{theorem}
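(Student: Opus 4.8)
\smallskip
\noindent\textbf{Proof strategy.} The plan is to prove the two implications separately, using the Berkson--Porta formula (Theorem~\ref{Berkson-P}(4)) pointwise in $t$ and layering measurability on top of it.

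\textbf{Forward implication.} Here I would first observe that conditions (i)--(ii) of Definition~\ref{Her-vec-man} are inherited directly from the measurability of $\tau$ and the holomorphy of $p(\cdot,t)$, and that for \emph{every} $t$ the vector field $z\mapsto G_{\tau,p}(z,t)=(\tau(t)-z)(\overline{\tau(t)}z-1)p(z,t)$ is literally of Berkson--Porta type with parameter $\tau(t)\in\oD$ and $\Re p(\cdot,t)\geq 0$, hence an infinitesimal generator; so the only real work is the local $L^d$ bound (iii). For that, on a compact $K\subset\{|z|\leq r\}$ the polynomial factor is bounded by $(1+r)^2$, while the Carath\'eodory-type estimate --- obtained by applying the Schwarz lemma to the Cayley transform of $z\mapsto p(z,t)-i\Im p(0,t)$ --- gives $|p(z,t)|\leq \tfrac{2}{1-r}\,|p(0,t)|$ for $|z|\leq r$. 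Since $t\mapsto p(0,t)$ lies in $L^d_{loc}$ by the definition of a Herglotz function of order $d$, the majorant $C_{K,T}(t):=\tfrac{2(1+r)^2}{1-r}|p(0,t)|$ works.

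\textbf{Converse, pointwise part.} For a Herglotz vector field $G$ of order $d$, I would split off the set $E:=\{t : G(\cdot,t)\equiv 0\}$, which is measurable since $G(z,\cdot)$ is measurable for each $z$ and $G(\cdot,t)$ is continuous, so $E$ is a countable intersection over a dense set of $z$'s; on $E$ set $\tau(t):=0$, $p(\cdot,t):=0$. For almost every $t\notin E$, $G(\cdot,t)$ is a nontrivial generator, so Theorem~\ref{Berkson-P}(4) yields $\tau(t)\in\oD$ and a holomorphic $p(\cdot,t)$ with $\Re p(\cdot,t)\geq 0$ and $G(z,t)=(\tau(t)-z)(\overline{\tau(t)}z-1)p(z,t)$, with $\tau(t)$ \emph{unique}: when $\tau(t)\in\D$ it is the only zero of $G(\cdot,t)$ in $\D$ (a holomorphic function with non-negative real part that is not identically zero has no zeros, so the zeros of $G(\cdot,t)$ come only from the polynomial factor), and when $\tau(t)\in\de\D$ it is the Denjoy--Wolff point of the semigroup generated by $G(\cdot,t)$ (cf.\ the Remark after Theorem~\ref{Berkson-P}); then $p(\cdot,t)$ is recovered by division. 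Granting that $\tau$ is measurable, I would set $p(z,t):=G(z,t)/[(\tau(t)-z)(\overline{\tau(t)}z-1)]$ --- the singularity at $z=\tau(t)$ being removable by the a~priori holomorphy of $p(\cdot,t)$ --- which is then measurable in $t$, holomorphic in $z$, and has $\Re p\geq 0$ a.e.\ (upgrade to all $t$ by modifying on a null set). That $p(z_0,\cdot)\in L^d_{loc}$ for each fixed $z_0$ I would get by a case split: if $|\tau(t)-z_0|\geq\tfrac{1-|z_0|}{2}$ then $|p(z_0,t)|\leq C\,|G(z_0,t)|\leq C\,C_{K,T}(t)$, and if $|\tau(t)-z_0|<\tfrac{1-|z_0|}{2}$ then, since $G(\tau(t),t)=0$, a Cauchy estimate on a fixed compact neighbourhood $K'$ of $z_0$ gives $|G(z_0,t)|=|G(z_0,t)-G(\tau(t),t)|\leq C\,C_{K',T}(t)\,|z_0-\tau(t)|$, hence again $|p(z_0,t)|\leq C\,C_{K',T}(t)$; in either case the right-hand side is in $L^d_{loc}$. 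So $p$ is a Herglotz function of order $d$ and $G=G_{\tau,p}$ a.e.; the uniqueness statement then follows from uniqueness of the Berkson--Porta data for a.e.\ $t\notin E$, while for a.e.\ $t\in E$ the polynomial factor is not identically zero (as $\tau,\tilde\tau\in\oD$), forcing $p=\tilde p=0$ there with $\tau$ unconstrained --- exactly as stated.

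\textbf{The hard part.} The crux is the measurability of $t\mapsto\tau(t)$. I would characterize $\tau(t)$ as the unique $\tau\in\oD$ for which $z\mapsto G(z,t)/[(\tau-z)(\overline{\tau}z-1)]$ extends holomorphically to $\D$ with non-negative real part --- for $\tau\in\D$ this means $G(\tau,t)=0$ together with $\Re(G(z,t)/[(\tau-z)(\overline{\tau}z-1)])\geq 0$ for all $z$, and for $\tau\in\de\D$ just the inequality (the factor $(\tau-z)(\overline{\tau}z-1)$ being zero-free in $\D$ in that case). Testing these conditions over a countable dense set of $z$ and using joint measurability of $G$ in $(z,t)$, the set of pairs $(t,\tau)$ satisfying them is Lebesgue${}\times{}$Borel measurable, and for a.e.\ $t$ its $t$-section is the singleton $\{\tau(t)\}$; measurability of $\tau$ then follows from a measurable selection theorem (or from universal measurability of the projection of that set). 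I expect this measurable-selection step --- together with the fact that it must accommodate the boundary case $\tau\in\de\D$ uniformly, where even the \emph{uniqueness} of $\tau$ rests on the dynamics of the generated semigroup rather than on counting zeros --- to be the only genuinely delicate point; everything else is bookkeeping built on Theorem~\ref{Berkson-P}.
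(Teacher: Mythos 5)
The paper does not actually prove this statement: it is quoted from \cite{Br-Co-Di-EF1} without argument, so there is no internal proof to measure yours against. Judged on its own terms, your proposal follows what is essentially the route of the cited source --- apply the Berkson--Porta representation of Theorem~\ref{Berkson-P}(4) for each fixed $t$ and then layer the measurability and $L^d$ issues on top --- and the substantive steps are sound. In the forward direction, the Carath\'eodory bound $|p(z,t)|\le \tfrac{2}{1-r}|p(0,t)|$ for $|z|\le r$ correctly reduces condition (iii) of Definition~\ref{Her-vec-man} to $p(0,\cdot)\in L^d_{loc}$, which is part of the definition of a Herglotz function of order $d$. In the converse, the splitting off of $E=\{t:G(\cdot,t)\equiv 0\}$, the identification of $\tau(t)$ as the unique interior zero (using that a nonzero holomorphic function with $\Re p\ge 0$ is zero-free) or as the Denjoy--Wolff point, and the two-case estimate for $p(z_0,\cdot)$ --- bounded denominator versus a Cauchy estimate exploiting $G(\tau(t),t)=0$ --- are exactly what is needed, and your handling of $E$ matches the weakened uniqueness claim for $\tau$ in the statement. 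One transcription slip to fix: you twice write the quadratic factor as $(\tau(t)-z)(\overline{\tau(t)}z-1)$, which is the \emph{negative} of the factor $(z-\tau(t))(\overline{\tau(t)}z-1)=(\tau(t)-z)(1-\overline{\tau(t)}z)$ appearing in the statement; as written it would force $\Re p\le 0$. This is clearly unintended and does not affect the argument.

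You rightly identify the measurability of $t\mapsto\tau(t)$ as the only delicate point, and your selection-theorem argument does work (a product-measurable subset of $[0,T]\times\oD$ with singleton $t$-sections projects measurably). A lighter alternative avoids measurable selection altogether: for a nonzero generator $H$ the Berkson--Porta point is characterized as the unique $\tau\in\oD$ with
\[
\Re\Bigl[H(z)\,\overline{(\tau-z)(1-\overline{\tau}z)}\Bigr]\ge 0\qquad \hbox{for all } z\in\D\setminus\{\tau\},
\]
(the interior condition $H(\tau)=0$ is automatic, since a simple pole of a function with nonnegative real part off the pole is impossible). This condition is closed under simultaneous limits $\tau_n\to\sigma$ in $\oD$ and $H_n\to H$ locally uniformly, so by compactness of $\oD$ and uniqueness the map $H\mapsto\tau_H$ is continuous on the set of nonvanishing generators with the compact--open topology; composing with the (Borel) map $t\mapsto G(\cdot,t)\in{\sf Hol}(\D,\C)$ gives measurability of $\tau$ directly. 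Either way, your proof is correct in outline and complete at the level of detail one would expect of a sketch.
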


We also give a generalization of the concept of evolution families:

\begin{definition}\label{L^d_EF}
Let $M$ be a complex manifold.
A family $(\v_{s,t})_{0\leq s\leq t}$ of holomorphic
self-mappings of $M$ is  an {\sl
 evolution family of order $d\geq 1$} (or $L^d$-evolution family) if it satisfies the {\sl evolution property}
\begin{equation}\label{evolution_property}
\v_{s,s}={\sf id},\quad \v_{s,t}=\v_{u,t}\circ \v_{s,u},\quad 0\leq s\leq u\leq t,
\end{equation}
 and if for any $T>0$ and for any compact set
  $K\subset\subset M$ there exists a  function $c_{T,K}\in
  L^d([0,T],\R^+)$ such that
  \begin{equation}\label{ck-evd}
d_M(\v_{s,t}(z), \v_{s,u}(z))\leq \int_{u}^t
c_{T,K}(\xi)d \xi, \quad z\in K,\  0\leq s\leq u\leq t\leq T.
  \end{equation}
\end{definition}

\begin{remark}
The Schwarz lemma and distortion estimates imply that a classical evolution family in the sense of Definition \ref{def-class-ev} is an evolution family of order $\infty$ in $\D$.
\end{remark}

A classical evolution family in the sense of Definition \ref{def-class-ev} is, by its very definition, constituted by univalent maps, while this is not required {\sl a priori} in the general definition of evolution family given in  Definition \ref{L^d_EF}. However, it is always a case that an evolution family is made of univalent functions, as the following proposition (cf. \cite[Prop. 2.3]{ABHK}) shows:

\begin{proposition}
Let $d\in[1,+\infty]$.
Let $(\v_{s,t})$ be an $L^d$-evolution family for some $d\geq 1$. Then for all $0\leq s\leq t$ the map $M\ni z\mapsto \v_{s,t}(z)$ is univalent.
\end{proposition}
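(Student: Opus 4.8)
The plan is to argue by contradiction, using the algebraic relation \eqref{evolution_property} to ``rewind'' a hypothetical collision of two orbits to the first instant at which it occurs, and then to contradict the fact that, near any fixed time, the transition maps are $C^{1}$-close to the identity on compacta.

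Fix $0\le s\le t$ and suppose, for contradiction, that there are $z_{1}\ne z_{2}$ in $M$ with $\v_{s,t}(z_{1})=\v_{s,t}(z_{2})$. Set $a(u):=\v_{s,u}(z_{1})$ and $b(u):=\v_{s,u}(z_{2})$ for $u\in[s,t]$. Applying \eqref{ck-evd} on a compact set $K$ containing $z_{1}$ and $z_{2}$ (here only $L^{d}_{\mathrm{loc}}\subset L^{1}_{\mathrm{loc}}$ is used, so the case $d=\infty$ is included) shows that $u\mapsto a(u)$ and $u\mapsto b(u)$ are continuous on $[s,t]$; hence $A:=\{u\in[s,t]:a(u)=b(u)\}$ is closed. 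It contains $t$ by hypothesis and avoids $s$ because $\v_{s,s}={\sf id}$ and $z_{1}\ne z_{2}$, so $u_{0}:=\inf A$ satisfies $u_{0}\in(s,t]$, $u_{0}\in A$, and $a(u)\ne b(u)$ for every $u\in[s,u_{0})$. Writing $w_{0}:=a(u_{0})=b(u_{0})$ and using $\v_{s,u_{0}}=\v_{u,u_{0}}\circ\v_{s,u}$ for $u\le u_{0}$, we obtain
\begin{equation*}
\v_{u,u_{0}}(a(u))=\v_{u,u_{0}}(b(u))=w_{0}\quad\text{for all }u\in[s,u_{0}),\qquad a(u),\,b(u)\longrightarrow w_{0}\ \ (u\to u_{0}^{-}),
\end{equation*}
so each $\v_{u,u_{0}}$ with $u<u_{0}$ collapses two \emph{distinct} points that lie arbitrarily close to $w_{0}$.

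The contradiction will come from proving that $\v_{u,u_{0}}$ is in fact injective on a fixed neighbourhood of $w_{0}$ once $u$ is close enough to $u_{0}$. Choose a holomorphic chart $\Phi$ centred at $w_{0}$ whose image contains a closed Euclidean ball, and let $K'\subset\subset K\subset\subset M$ be the corresponding concentric compact neighbourhoods of $w_{0}$. Applying \eqref{ck-evd} with $(u,u,u_{0})$ playing the roles of $(s,u,t)$ gives $d_{M}(\v_{u,u_{0}}(z),z)\le\int_{u}^{u_{0}}c_{T,K}(\xi)\,d\xi=:\varepsilon(u)$ for $z\in K$, with $\varepsilon(u)\to0$ as $u\to u_{0}^{-}$ since $c_{T,K}\in L^{1}_{\mathrm{loc}}$. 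Because $d_{M}(K',M\setminus K)>0$ and $d_{M}$ is, near $w_{0}$, locally bi-Lipschitz equivalent to the Euclidean distance read in the chart, for $u$ close to $u_{0}$ the map $\v_{u,u_{0}}$ sends $K'$ into $K$ and converges uniformly on $K'$ to the identity (in chart coordinates); by the Cauchy estimates its differential in the chart then converges uniformly to the identity on a slightly smaller ball, so $\v_{u,u_{0}}$ is there a small $C^{1}$-perturbation of the identity, hence injective on a fixed ball $B\ni w_{0}$. But for $u<u_{0}$ close to $u_{0}$ we have $a(u),b(u)\in B$ with $a(u)\ne b(u)$ and $\v_{u,u_{0}}(a(u))=w_{0}=\v_{u,u_{0}}(b(u))$, contradicting injectivity on $B$. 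Therefore no such $z_{1}\ne z_{2}$ exist and $\v_{s,t}$ is univalent.

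I expect the only delicate point to be this local step: passing rigorously from the intrinsic bound $d_{M}(\v_{u,u_{0}}(z),z)\le\varepsilon(u)$ to a genuine $C^{1}$ estimate in coordinates. This requires controlling that orbits issued near $w_{0}$ do not leave the chart before returning near $w_{0}$, which is exactly where one uses $K'\subset\subset K$ together with the local equivalence of $d_{M}$ with a Euclidean metric; once the maps are known to remain in $K$ and to be $C^{0}$-small there, holomorphy (via the Cauchy estimates) and the elementary injectivity criterion for perturbations of the identity on a convex set complete the argument.
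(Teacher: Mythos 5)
Your proof is correct and follows essentially the same route as the paper's: both rewind a hypothetical collision to the first time $u_0=\inf\{u:\v_{s,u}(z_1)=\v_{s,u}(z_2)\}$ and derive a contradiction from the fact that $\v_{u,u_0}\to{\sf id}$ uniformly on compacta as $u\to u_0^-$, which forces injectivity near the collision point. The only difference is that the paper cites \cite{Br-Co-Di-EF2} for the locally uniform convergence to the identity and leaves the Hurwitz-type ``eventual injectivity on compacta'' step implicit, whereas you derive both directly from \eqref{ck-evd}, the Cauchy estimates, and the $C^1$-perturbation criterion.
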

\begin{proof}
We proceed by contradiction. Suppose  there exists $0<s<t$ and $z\neq w$ in $M$ such that $\v_{s,t}(z)=\v_{s,t}(w).$ Set $r:= \inf \{u\in [s,t]: \v_{s,u}(z)=\v_{s,u}(w)\}.$
By Lemma \cite[Lemma 2]{Br-Co-Di-EF2}, $\lim_{u\to s+}\v_{s,u}={\sf id}$ uniformly on compacta, we have $r>s$. If $u\in (s,r)$, $$\v_{u,r}(\v_{s,u}(z))=\v_{u,r}(\v_{s,u}(w)),$$ and since $\v_{s,u}(z)\neq \v_{s,u}(w),$ the mappings $\v_{u,r}$, $u\in (s,r)$, are not univalent on a fixed relatively compact subset of $M$. But again by \cite[Lemma 2]{Br-Co-Di-EF2}, $\lim_{u\to r-} \v_{u,r}={\sf id}$ uniformly on compacta, which is a contradiction since the identity mapping is univalent.
\end{proof}

\begin{remark}
If $(\phi_t)$ is a continuous semigroup of holomorphic self-maps of a complex manifold $M$, one can define an evolution family $(\v_{s,t})$ by setting
\[
\v_{s,t}:=\phi_{t-s},\quad s\leq t.
\]
It is not too difficult to check that $(\v_{s,t})$ is a $L^\infty$-evolution family in the sense of the above Definition \ref{L^d_EF}.
\end{remark}

The classical Loewner and Kufarev-Loewner equations can be generalized as follows:

\begin{theorem}\label{prel-thm}
Let $M$ be a complete hyperbolic complex manifold. Then for any  Herglotz vector field $G$ of order
$d\in [1,+\infty]$ there exists a unique $L^d$-evolution family
$(\v_{s,t})$  over $M$ such that for all
$z\in M$
  \begin{equation}\label{solve}
\frac{\de \v_{s,t}}{\de t}(z)=G(\v_{s,t}(z),t) \quad
\hbox{a.e.\ } t\in [s,+\infty).
  \end{equation}
Conversely for any  $L^d$-evolution family $(\v_{s,t})$  over $M$ there exists a  Herglotz vector field $G$ of
order $d$ such that \eqref{solve} is satisfied. Moreover,
if $H$ is another weak holomorphic vector field which satisfies
\eqref{solve} then $G(z,t)=H(z,t)$ for all $z\in M$ and almost
every $t\in \R^+$.
\end{theorem}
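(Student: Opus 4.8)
The plan is to establish the correspondence between Herglotz vector fields and evolution families in two directions, treating existence/uniqueness of solutions and the reverse construction separately.

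\medskip

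\emph{From Herglotz vector fields to evolution families.} Given a Herglotz vector field $G$ of order $d$, I would first solve, for each fixed $s\geq 0$ and each $z\in M$, the non-autonomous Cauchy problem $\dot w=G(w,t)$, $w(s)=z$. Since $G(\cdot,t)$ is holomorphic and the local bound (iii) in Definition \ref{Her-vec-man} gives $L^d\subset L^1_{\mathrm{loc}}$ control in $t$, Carath\'eodory's existence theorem yields a local absolutely continuous solution; holomorphic dependence on the initial datum $z$ follows from the holomorphic flow-box theorem for Carath\'eodory vector fields (as in \cite{Br-Co-Di-EF1}), so that $z\mapsto\v_{s,t}(z)$ is holomorphic wherever defined. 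The crucial point is that the solution does \emph{not} blow up and stays in $M$ for all $t\geq s$: this is exactly where the infinitesimal-generator hypothesis enters. By the Remark after Definition \ref{Her-vec-man}, for a.e.\ $t$ the vector field $G(\cdot,t)$ satisfies $(dk_M)_{(z,w)}\cdot(G(z,t),G(w,t))\leq 0$, so $t\mapsto k_M(\v_{s,t}(z),\v_{s,t}(w))$ is non-increasing; fixing a base point $w_0$ and using completeness of $k_M$, the orbit of $z$ is confined to a $k_M$-ball and hence stays in a complete-hyperbolic-bounded region, so it cannot escape $M$ or reach $\infty$ in finite time. Thus $\v_{s,t}$ is globally defined for all $0\leq s\leq t$. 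The evolution property \eqref{evolution_property} is then immediate from uniqueness of solutions of the Cauchy problem (the trajectory started at $z$ at time $s$, evaluated at $u$, then continued, is the trajectory started at $\v_{s,u}(z)$ at time $u$), and $\v_{s,s}=\mathsf{id}$ is clear. Finally the $L^d$-bound \eqref{ck-evd} follows by integrating the differential equation: $d_M(\v_{s,t}(z),\v_{s,u}(z))\leq\int_u^t\|G(\v_{s,\xi}(z),\xi)\|\,d\xi$, and since the orbit of a compact set $K$ stays in a fixed compact (by the $k_M$-confinement just proved, using completeness), one bounds the integrand by the function $C_{K',T}\in L^d$ from (iii).

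\medskip

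\emph{From evolution families to Herglotz vector fields.} Conversely, given an $L^d$-evolution family $(\v_{s,t})$, I want to produce $G$ with \eqref{solve}. The idea is that $G(z,t)$ should be the ``right derivative'' $\lim_{h\to0^+}\tfrac{1}{h}(\v_{t,t+h}(z)-z)$. One first shows, using \eqref{ck-evd} together with the semigroup property and a Vitali/normal-families argument, that for a.e.\ $t$ the maps $\v_{t,t+h}$ are differentiable in $h$ at $h=0^+$ in a locally uniform sense, yielding a holomorphic vector field $G(\cdot,t)$; measurability in $t$ and the order-$d$ bound come directly from \eqref{ck-evd}. Then one verifies that for fixed $s$, $t\mapsto\v_{s,t}(z)$ solves \eqref{solve}: writing $\v_{s,t+h}=\v_{t,t+h}\circ\v_{s,t}$ and differentiating gives $\partial_t\v_{s,t}(z)=G(\v_{s,t}(z),t)$ at points of differentiability, and the absolute continuity guaranteed by \eqref{ck-evd} upgrades this to hold a.e. To see that $G$ is genuinely a Herglotz vector field, i.e.\ that $G(\cdot,t)$ is an infinitesimal generator for a.e.\ $t$, one uses that each $\v_{s,t}$ is holomorphic self-map of $M$ (Proposition above), so $k_M(\v_{t,t+h}(z),\v_{t,t+h}(w))\leq k_M(z,w)$; dividing by $h$ and letting $h\to0^+$ gives $(dk_M)_{(z,w)}\cdot(G(z,t),G(w,t))\leq0$, which by Theorem \ref{autonomo} means $G(\cdot,t)$ is an infinitesimal generator.

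\medskip

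\emph{Uniqueness.} In the forward direction, uniqueness of $(\v_{s,t})$ is the uniqueness part of Carath\'eodory's theorem for the Cauchy problem. For the last assertion, if $H$ is any weak holomorphic vector field also satisfying \eqref{solve}, then for each fixed $s$ and $z$ the absolutely continuous curve $t\mapsto\v_{s,t}(z)$ satisfies $\dot\gamma=G(\gamma,t)$ and $\dot\gamma=H(\gamma,t)$ a.e., hence $G(\v_{s,t}(z),t)=H(\v_{s,t}(z),t)$ for a.e.\ $t$; letting $s\uparrow t$ and using $\v_{s,t}\to\mathsf{id}$ (from \cite[Lemma 2]{Br-Co-Di-EF2}), a Fubini argument in $(s,t)$ gives $G(z,t)=H(z,t)$ for all $z$ and a.e.\ $t$. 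I expect the main obstacle to be the forward direction's global-existence step — rigorously turning the $k_M$-monotonicity into a no-escape statement requires completeness of $k_M$ and a little care that the confining ball is compact — together with establishing a.e.\ differentiability in $h$ of $\v_{t,t+h}$ in the reverse direction, where one must combine \eqref{ck-evd} with holomorphy to get a locally uniform derivative rather than just a pointwise one.
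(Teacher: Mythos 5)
Your overall architecture is the right one, and in fact it matches the proofs the paper defers to: the survey does not prove Theorem \ref{prel-thm} but cites \cite{Br-Co-Di-EF1}, \cite{HKM} and \cite{AB}, where the argument is exactly Carath\'eodory ODE theory plus Cauchy estimates (to get local Lipschitzness in $z$ with $L^1$ control in $t$) for the forward direction, a.e.\ differentiation of $t\mapsto\v_{s,t}(z)$ combined with a Vitali/normal-families argument for the converse, and contraction of $k_M$ to detect the infinitesimal-generator property. The converse direction and the two uniqueness statements are correctly sketched, modulo the standard bookkeeping with countable dense families of $(s,z)$ and the identity principle for holomorphic maps.

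There is, however, one genuine gap, at precisely the step you flag as delicate: global existence in the forward direction. Your confinement argument says that $t\mapsto k_M(\v_{s,t}(z),\v_{s,t}(w_0))$ is non-increasing, hence the orbit of $z$ is trapped in a $k_M$-ball. But that ball is centered at $\v_{s,t}(w_0)$, which is itself a trajectory of the same equation and is not yet known to stay in $M$ up to the putative blow-up time $T$; the two orbits could escape together while remaining at bounded mutual $k_M$-distance. As stated the argument is circular. The fix (the key lemma in \cite{AB}) is to keep the reference point \emph{fixed}: combining the generator inequality $(dk_M)_{(z_0,x(t))}\cdot(G(z_0,t),G(x(t),t))\leq 0$ with the Lipschitz estimate $|(dk_M)_{(z_0,w)}\cdot(v,0)|\leq \kappa_M(z_0;v)$ gives, for the maximal solution $x(\cdot)$ on $[s,T)$,
\[
\frac{d}{dt}\,k_M\bigl(z_0,x(t)\bigr)=(dk_M)_{(z_0,x(t))}\cdot\bigl(0,G(x(t),t)\bigr)\leq \kappa_M\bigl(z_0;G(z_0,t)\bigr)\leq C\,C_{\{z_0\},T}(t),
\]
which is in $L^1([s,T])$ by condition (iii) applied to the single point $z_0$. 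Integrating shows $k_M(z_0,x(t))$ stays bounded as $t\to T^-$, and since $k_M$ is a complete inner distance its closed balls about the \emph{fixed} point $z_0$ are compact, so the solution extends past $T$. Note that your last step of the forward direction (producing the compact set $K'$ containing the orbit of a compact $K$, needed to verify \eqref{ck-evd}) relies on exactly this corrected confinement, so the gap propagates there as well.
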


Equation \eqref{solve} is the bridge between the $L^d$-Herglotz vector fields and $L^d$-evolution families. The result has been proved in \cite{Br-Co-Di-EF1} for the case $M=\D$ the unit disc in $\C^n$. In \cite{Br-Co-Di-EF1} it has been proved to hold for any complete hyperbolic complex manifold $M$ with Kobayashi distance of class $C^1$ outside the diagonal, but the construction given there only allowed to start with evolution families of order $d=+\infty$. Next, in \cite{HKM} the case of $L^d$-evolution families has been proved for the case $M=\B^n$  the unit ball in $\C^n$. Finally, in \cite{AB}, L. Arosio and the author proved Theorem \ref{prel-thm} in complete generality.

The previous equation, especially in the case of the unit ball of $\C^n$ and for the case $d=+\infty$, with evolution families fixing the origin and having some particular first jets at the origin has been studied by many authors, we cite here J.A. Pfaltzgraff \cite{Pf74}, \cite{Pf75}, T. Poreda \cite{Por91}, I. Graham, H. Hamada, G. Kohr \cite{GHK01}, I. Graham, H. Hamada, G. Kohr, M. Kohr \cite{GHK09} (see also \cite{GKP03}).

The strong relation between semigroups and evolution families on the one side and Herglotz vector fields and infinitesimal generators on the other side, is very much reflected by the so-called ``product formula'' in convex domains of S. Reich and D. Shoikhet \cite{RS} (see also \cite{Reich-Shoikhet}), generalized on complete hyperbolic manifold in \cite{AB}. Such a formula can be rephrased as follows: let $G(z,t)$ be a Herglotz vector field on a complete hyperbolic complex manifold $M$. For almost all $t\geq 0$, the holomorphic vector field $M\ni z\mapsto G(z,t)$ is an infinitesimal generator. Let $(\phi_r^t)$ be the associated semigroups of holomorphic self-maps of $M$. Let $(\v_{s,t})$ be the evolution family associated to $G(z,t)$. Then, uniformly on compacta of $M$ it holds
\[
\phi_t^r=\lim_{m\to \infty} \v_{t,t+\frac{r}{m}}^{\circ m}=\lim_{m\to \infty} \underbrace{(\v_{t,t+\frac{r}{m}}\circ \ldots\circ \v_{t,t+\frac{r}{m}})}_m.
\]
Using such a formula for the case of the unit disc $\D$, in \cite{BCD3} it has been proved the following result which gives a description of semigroups-type evolution families:

\begin{theorem}
Let $G(z,t)$ be a $L^d$-Herglotz vector field in $\D$ and let $(\v_{s,t})$ be the associated $L^d$-evolution family. The following are equivalent:
\begin{enumerate}
  \item there exists a function $g\in L^d_{loc}([0,+\infty),\C)$ and an infinitesimal generator $H$ such that $G(z,t)=g(t)H(z)$ for all $z\in \D$ and almost all $t\geq 0$,
  \item $\v_{s,t}\circ \v_{u,v}=\v_{u,v}\circ \v_{s,t}$ for all $0\leq s\leq t$ and $0\leq u\leq v$.
\end{enumerate}
\end{theorem}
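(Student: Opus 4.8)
The plan is to exploit the product formula recalled just before the statement, which expresses the semigroups $(\phi_r^t)$ generated by $z\mapsto G(z,t)$ as iterated limits of the evolution family maps $\v_{t,t+r/m}$. For the direction (1)$\Rightarrow$(2), suppose $G(z,t)=g(t)H(z)$ for a fixed infinitesimal generator $H$ generating a semigroup $(\phi_r)$. First I would check that, up to the time-reparametrization $\sigma(t):=\int_0^t g(\xi)\,d\xi$, one has $\v_{s,t}=\phi_{\sigma(t)-\sigma(s)}$: indeed, the right-hand side satisfies the evolution property \eqref{evolution_property} and the ODE \eqref{solve} with the given $G$, and by the uniqueness part of Theorem \ref{prel-thm} it must coincide with $(\v_{s,t})$. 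Since all $\v_{s,t}$ are then elements of the one-parameter semigroup $(\phi_r)$, which is abelian, the commutation relation in (2) follows immediately.

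For the harder direction (2)$\Rightarrow$(1), I would start from the hypothesis that all the $\v_{s,t}$ commute with each other and first deduce that each semigroup $(\phi_r^t)$ arising from the product formula commutes with every $\v_{u,v}$: writing $\phi_r^t=\lim_m \v_{t,t+r/m}^{\circ m}$ and using that $\v_{u,v}$ commutes with each factor $\v_{t,t+r/m}$, the limit $\phi_r^t$ commutes with $\v_{u,v}$ as well, and letting $v\to u$ along the same product-formula scheme one gets that the semigroups $(\phi_r^t)$ for different $t$ all commute with each other. The key consequence to extract is that the infinitesimal generators $H_t(z):=G(z,t)$, for almost all $t$, all commute as vector fields, equivalently that their flows commute. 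I expect this to be the main obstacle: translating ``the flows commute'' into ``the generators are scalar multiples of one fixed $H$'' is not purely formal. The route I would take is to use the characterization of infinitesimal generators on $\D$ (Theorem \ref{Berkson-P}, Berkson--Porta's formula $H_t(z)=(\tau(t)-z)(1-\overline{\tau(t)}z)p(z,t)$): commuting holomorphic flows on $\D$ must share the same Denjoy--Wolff point, so $\tau(t)\equiv\tau$ is independent of $t$; after conjugating to put $\tau$ at $0$ (or at $\infty$ in the half-plane) one has $H_t(z)=-z\,p(z,t)$ with $\Re p(\cdot,t)\ge 0$, and commutation of the flows forces, for a.e.\ $s,t$, the identity $H_s'\cdot H_t = H_t'\cdot H_s$ (the vanishing of the Lie bracket); this is a first-order ODE relation in $z$ which, together with holomorphicity, yields $p(z,t)=g(t)p_0(z)$ for a scalar function $g$, i.e.\ $H_t=g(t)H_0$.

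Once that pointwise-in-$t$ statement is in hand, the remaining steps are routine: measurability of $g$ follows from measurability of $t\mapsto G(z,t)$ by evaluating at a fixed $z_0$ with $H_0(z_0)\neq 0$, namely $g(t)=G(z_0,t)/H_0(z_0)$; and the $L^d_{loc}$ bound on $g$ follows from the order-$d$ growth bound (iii) in the definition of a Herglotz vector field applied on a compact neighborhood of $z_0$. Setting $H:=H_0$ then gives (1). I would present the commuting-flows-to-proportional-generators step as the heart of the argument and refer to \cite{BCD3} for the detailed computation with the Berkson--Porta data, sketching only the reduction to a common Denjoy--Wolff point and the bracket identity; the degenerate cases $H_t\equiv 0$ on a positive-measure set and the case of a group of rotations (where $\tau$ is not well defined by dynamics alone) would be handled separately, observing that on such a time-set one may absorb the ambiguity into $g$ without affecting the conclusion.
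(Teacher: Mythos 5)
The paper does not actually prove this theorem: it states it as a result of \cite{BCD3}, indicating only that the proof ``uses'' the Reich--Shoikhet product formula $\phi^t_r=\lim_m \v_{t,t+r/m}^{\circ m}$, so there is no in-text argument to compare against. Your outline is consistent with that pointer and is essentially sound: the product formula does transfer the commutation hypothesis from the evolution family to the semigroups $(\phi^t_r)$, and differentiating $\phi^s_r\circ\phi^t_{r'}=\phi^t_{r'}\circ\phi^s_r$ at $r=r'=0$ gives the Wronskian identity $H_s'H_t=H_t'H_s$. Two remarks. First, your Denjoy--Wolff/Berkson--Porta reduction is dispensable: from $H_s'H_t-H_t'H_s\equiv 0$ with $H_s\not\equiv 0$ one gets $(H_t/H_s)'=0$ off the discrete zero set of $H_s$, hence $H_t=c\,H_s$ directly, which also removes the need to treat the rotation-group case separately (only the set of times with $H_t\equiv 0$ needs the absorption into $g$ that you describe). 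Second, in (1)$\Rightarrow$(2) the function $g$ is complex-valued, so $\sigma(t)-\sigma(s)$ need not be a nonnegative real number and $\phi_{\sigma(t)-\sigma(s)}$ is not literally an element of the real semigroup $(\phi_r)_{r\ge 0}$; you need the holomorphic extension of the flow in its time parameter (the local complex flow of $H$), whose local group property then yields the commutation. This is a fixable imprecision rather than a gap, but as written the identification $\v_{s,t}=\phi_{\sigma(t)-\sigma(s)}$ does not typecheck.
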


\section{Abstract Loewner chains}

In order to end up the picture started with the classical Loewner theory, we should put in the frame also the Loewner chains.

In the unit disc $\D$, the general theory of Loewner chains has been settled by M. D. Contreras, S. D\'iaz-Madrigal and P. Gumenyuk \cite{Co-Di-Gu}, who showed that to each $L^d$-evolution family $(\v_t)$  is associated an (essentially unique) ``$L^d$-Loewner chain'' $(f_t)$ of univalent maps $f_t:\D \to \C$ such that $f_s=f_t\circ \v_{s,t}$ for all $0\leq s\leq t$. Their proof relies on a limiting process similar (although technically more complicated) to the classical case.

From this point of view, if $D\subset\C^n$ is a bounded domain, apparently, it seems natural to define a Loewner chain as a family of univalent mappings $f_t:D \to \C^n$. In fact, in case $D=\B^n$ the unit ball, much effort has been done to show that, given an evolution family $(\v_{s,t})$ on $\B^n$ such that $\v_{s,t}(0)=0$ and $d(\v_{s,t})_0$ has a special form, then there exists an associated Loewner chain. We cite here the contributions of  J.A. Pfaltzgraff \cite{Pf74}, \cite{Pf75}, T. Poreda \cite{Por91}, I. Graham, H. Hamada, G. Kohr \cite{GHK01}, I. Graham, H. Hamada, G. Kohr, M. Kohr \cite{GHK09}, L. Arosio \cite{A10}, M. Voda \cite{Vo}. In the last two mentioned papers, resonances phenomena among the eigenvalues of $d(\v_{s,t})_0$ are taken into account. However, the (very natural) fact that resonances enter into the game, gives a clue that possibly, if one stays with the willing of looking for chains with values in $\C^n$, associated Loewner chains might not always exist.  Not to talk about evolution families on a complex manifold: in such a case, what is the appropriated target domain for Loewner chains?

The previous question, which leads to a very general theory, has been answer in \cite{ABHK}. Interesting and surprisingly enough, regularity conditions--which were basic in the classical theory for assuming the limiting process to converge---do not play any role. In order to explain our results, we give some definition:

\begin{definition}\label{algebraic_EF}
Let $M$ be a complex manifold.
An {\sl algebraic evolution family} is a family $(\v_{s,t})_{0\leq s\leq t}$ of univalent self-mappings of $M$  satisfying the  evolution property \eqref{evolution_property}.
\end{definition}

A $L^d$-evolution family is an algebraic evolution family because all elements of a $L^d$-evolution family are injective as we showed before.

\begin{definition}
Let $M, N$ be two complex manifolds of the same dimension.  A family  $(f_t)_{t\geq 0}$ of holomorphic mappings $f_t:M\to N$ is a {\sl subordination chain}  if for each $0\leq s\leq t$ there
exists a holomorphic mapping $v_{s,t}:M\to M$ such that
$f_s=f_t\circ v_{s,t}$.
A subordination chain $(f_t)$ and an algebraic evolution family $(\v_{s,t})$ are {\sl associated} if
$$ f_s=f_t\circ \v_{s,t},\quad 0\leq s\leq t.$$

An {\sl algebraic Loewner chain} is a  subordination chain such that each mapping $f_t: M\to N$ is univalent.   The {\sl range}  of an algebraic  Loewner chain is defined as
\[
\rg(f_t):=\bigcup_{t\geq 0}f_t(M).
\]
\end{definition}

Note that an algebraic Loewner chain $(f_t)$ has the property that
$$f_s(M)\subset f_t(M),\quad 0\leq s\leq t.$$

We have the following result which relates algebraic evolution families with algebraic Loewner chains, whose proof is essentially based on abstract categorial analysis:

\begin{theorem}\cite{ABHK}\label{ABHKthm}
Let $M$ be a complex manifold. Then any  algebraic evolution family $(\v_{s,t})$ on $M$ admits  an associated algebraic Loewner chain $(f_t\colon M\to N)$.
Moreover if $(g_t\colon M\to Q)$ is a subordination chain associated with $(\v_{s,t})$ then there exist a holomorphic mapping $\Lambda\colon \rg(f_t)\to Q$ such that $$g_t=\Lambda\circ f_t,\quad \forall t\geq 0.$$ The mapping $\Lambda$ is univalent if and only if $(g_t)$ is an algebraic Loewner chain, and in that case $\rg(g_t)=\Lambda(\rg(f_t)).$
\end{theorem}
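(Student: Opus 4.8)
The statement is essentially a universal-property (terminal-object) assertion, so the natural approach is to build the associated Loewner chain as a direct limit in a suitable category. First I would fix the index category: objects are non-negative reals $t\geq 0$, with a unique morphism $s\to t$ whenever $s\leq t$, and consider the functor sending $t$ to the complex manifold $M$ and the morphism $s\to t$ to $\v_{s,t}\colon M\to M$; the evolution property \eqref{evolution_property} is precisely functoriality. The target $N$ together with the maps $f_t\colon M\to N$ should then be realized as the colimit of this diagram in the category of complex manifolds (of the fixed dimension $\dim M$) with holomorphic maps, and the universal property of the colimit will give both the existence of $(f_t)$ and the factorization $g_t=\Lambda\circ f_t$ for any competing subordination chain $(g_t\colon M\to Q)$.

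**Key steps, in order.** (1) Construct the colimit set-theoretically: on $\bigsqcup_{t\geq 0} M$ declare $x\in M_s$ equivalent to $y\in M_t$ (say $s\leq t$) if $\v_{s,t}(x)=y$; more precisely, since the $\v_{s,t}$ need not be compatible on the nose one takes the equivalence relation generated by this, and checks (using the cocycle identity and, crucially, injectivity of each $\v_{s,t}$, guaranteed by Definition \ref{algebraic_EF}) that two points of $M_s$ and $M_t$ are identified iff they agree after pushing forward to some large $M_u$. Let $N$ be the quotient and $f_t\colon M\to N$ the natural maps; then $f_s=f_t\circ\v_{s,t}$ by construction. (2) Put a complex manifold structure on $N$: because each $\v_{s,t}$ is an open holomorphic embedding (injective holomorphic self-map of a manifold, hence — using that it is a local biholomorphism onto its image, which for equidimensional injective holomorphic maps follows from the fact that the Jacobian cannot vanish on a nonempty open set without the map being non-injective) the maps $f_t$ can be used to transport charts: declare $U\subset N$ open iff $f_t^{-1}(U)$ is open for all $t$, and use the charts of $M$ via the $f_t$. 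One must check these charts are holomorphically compatible, which again reduces to holomorphy of the transition maps $\v_{s,t}$, and that $f_t$ is injective, so that $(f_t)$ is an \emph{algebraic Loewner chain} and not merely a subordination chain. (3) Verify the universal property: given $(g_t\colon M\to Q)$ with $g_s=g_t\circ\v_{s,t}$, the maps $g_t$ are constant on equivalence classes, hence descend to a well-defined map $\Lambda\colon \rg(f_t)=N\to Q$ with $g_t=\Lambda\circ f_t$; holomorphy of $\Lambda$ follows since $\Lambda\circ f_t=g_t$ is holomorphic and $f_t$ is a biholomorphism onto its (open) image, and these images cover $N$. (4) Finally, $\Lambda$ is univalent iff each $g_t$ is univalent: if every $g_t$ is injective then $\Lambda$ is injective on each $f_t(M)$, and injectivity across different $f_t(M)$'s follows because any two points of $N$ already lie in a common $f_u(M)$; conversely if $\Lambda$ is univalent then $g_t=\Lambda\circ f_t$ is a composition of injective maps. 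In that case $\rg(g_t)=\bigcup g_t(M)=\bigcup \Lambda(f_t(M))=\Lambda(\rg(f_t))$.

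**Main obstacle.** The delicate point is step (2): endowing the set-theoretic colimit $N$ with a Hausdorff complex manifold structure of the right dimension, and verifying that the $f_t$ are open holomorphic embeddings. The quotient topology could a priori fail to be Hausdorff or second countable, and one must argue — using injectivity and openness of the $\v_{s,t}$, together with the fact that the filtered colimit is taken over a directed poset — that distinct points of $N$ have disjoint saturated neighborhoods and that the manifold charts glue. (One should note the arguments here are purely categorical/topological and, as emphasized in the discussion preceding the theorem, require \emph{no} regularity of the evolution family in $t$ — the index $t$ plays no analytic role whatsoever.) Everything else — the factorization through $\Lambda$, its holomorphy, and the univalence equivalence — is then formal once the colimit has been built in the category of complex manifolds.
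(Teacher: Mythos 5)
Your proposal is correct and follows essentially the same route as the paper: the paper also builds $N$ as the quotient of $M\times\R^+$ by the relation ``$\v_{s,u}(x)=\v_{t,u}(y)$ for $u$ large enough'' (i.e.\ the colimit over the directed poset of times), transports the complex structure via the open embeddings $f_t=\pi\circ i_t$, and obtains $\Lambda$ from the universal property of the quotient. The Hausdorff/chart-compatibility issues you flag are handled exactly as you indicate, via injectivity and openness of the $\v_{s,t}$ and the fact that any two points of $N$ lie in a common $f_u(M)$.
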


\begin{proof}
We define an equivalence relation on the product $M\times \mathbb{R}^+$:
$$(x,s)\sim (y,t)\quad\mbox{iff}\quad   \v_{s,u}(x)=\v_{t,u}(y) \mbox{ for $u$ large enough}.$$
and we let $N:=(M\times \mathbb{R}^+)/_\sim$.
Let $\pi\colon M\times \mathbb{R}^+\to N$ be the projection on the quotient, and let  $i_t\colon M\to M\times \mathbb{R}^+$ be the injection $i_t(x)=(x,t)$. Define a family of mappings $(f_t\colon M\to N)$ as $$f_t:= \pi\circ i_t,\quad t\geq 0.$$ Each mapping $f_t$ is injective and by construction $$f_s=f_t\circ\v_{s,t}, \quad 0\leq s\leq t.$$
Thus we have  $f_s(M)\subset f_t(M)$ for $0\leq s\leq t$ and $N=\bigcup_{t\geq 0}f_t(M)$.

Endow the product  $M\times \mathbb{R}^+$ with the product topology, considering on $\R^+$ the discrete topology. Endow $N$ with the quotient topology. Each mapping $f_t$ is continuous and open, hence it is an homeomorphism onto its image and we define a complex structure on $N$ by considering the $M$-valued charts $(f^{-1}_t,f_t(M))$ for all $t\geq 0$.

If $(g_t\colon M\to Q)$ is a subordination chain associated with $(\v_{s,t})$, then the map $\Psi\colon M\times \mathbb{R}^+\to Q$
$$(z,t)\mapsto g_t(z)$$  is compatible with the equivalence relation $\sim$, thus it passes  to the quotient defining a holomorphic mapping $\Lambda\colon N\to Q$ such that $$g_t=\Lambda\circ f_t,\quad t\geq 0.$$
\end{proof}

The previous theorem shows that the range $\rg (f_t)$ of an algebraic Loewner chain $(f_t)$ is uniquely defined up to biholomorphisms. In particular, given an algebraic evolution family $(\v_{s,t})$ one can define its {\sl Loewner range} ${\sf Lr}(\v_{s,t})$  as the class of biholomorphism of the range of any associated algebraic Loewner chain.

As one can suspect, the $L^d$ regularity of an algebraic evolution family passes to the associated algebraic Loewner chain. This is the right definition:

\begin{definition}
Let $d\in [1,+\infty]$. Let $M, N$ be two complex manifolds of
the same dimension. Let $d_N$ be the distance induced by a Hermitian metric on $N$. An algebraic Loewner chain $(f_t\colon M\to N)$  is a {\sl $L^d$-Loewner chain} (for $d\in [1,+\infty]$) if for any compact set $K\subset\subset M$ and any
  $T>0$ there exists a $k_{K,T}\in L^d([0,T], \R^+)$ such that
 \begin{equation}\label{LCdef}
d_N(f_s(z), f_t(z))\leq \int_{s}^t k_{K,T}(\xi)d\xi
 \end{equation}
for all $z\in K$ and for all $0\leq s\leq t\leq T$.
\end{definition}

The $L^d$-regularity passes from evolution family to Loewner chains and back:

\begin{theorem}\cite{ABHK}
Let  $M$ be a  complete hyperbolic manifold with a given Hermitian metric and $d\in [1,+\infty]$. Let $\v_{s,t}$ be an algebraic evolution family on $M$ and let $(f_t\colon M\to N)$ be an associated algebraic Loewner
chain.
Then $(\v_{s,t})$ is a $L^d$-evolution family on $M$ if and only if $(f_t)$ is a $L^d$-Loewner chain.
\end{theorem}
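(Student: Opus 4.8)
The plan is to prove the two implications together, by turning the defining relation $f_s=f_t\circ\v_{s,t}$ into a quantitative two-sided comparison between the $d_N$-increments of the chain and the $d_M$-increments of the evolution family. Going one way, $f_t$ sends the $d_M$-displacement of $\v_{s,t}(z)$ from $z$ to the $d_N$-displacement of $f_s(z)=f_t(\v_{s,t}(z))$ from $f_t(z)$; going the other way, the biholomorphism $f_t^{-1}\colon f_t(M)\to M$ sends it back. Everything then comes down to bounding the local Lipschitz constants of $f_t$ and of $f_t^{-1}$ uniformly for $t$ in a compact interval $[0,T]$.

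\textbf{Step 1: the relevant trajectories stay in compacta.} I would first record that, under either hypothesis, for every compact $K\subset\subset M$ and every $T>0$ the sets $\{\v_{s,t}(z):z\in K,\ 0\le s\le t\le T\}$ and $\{f_t(z):z\in K,\ 0\le t\le T\}$ are relatively compact in $M$ and in $N$ respectively, and the maps $(z,s,t)\mapsto\v_{s,t}(z)$, $(z,t)\mapsto f_t(z)$ are continuous on the corresponding compact parameter sets. For the evolution family this is essentially contained in \cite{Br-Co-Di-EF2}: the estimate \eqref{ck-evd} makes $t\mapsto\v_{s,t}(z)$ $d_M$-absolutely continuous, hence continuous into $M$ locally uniformly, and $\v_{s,u}\to\mathrm{id}$ as $u\to s^+$. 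For the chain, \eqref{LCdef} makes $t\mapsto f_t(z)$ $d_N$-absolutely continuous, hence $f_t\to f_{t_0}$ locally uniformly; since $f_t(\v_{s,t}(z))\equiv f_s(z)$ is constant in $t$ and $f_{t_0}$ is a homeomorphism onto its open image, a normal-families argument forces $\v_{s,t}(z)\to f_{t_0}^{-1}(f_s(z))=\v_{s,t_0}(z)$ and keeps the relevant trajectories inside compacta of $M$.

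\textbf{Step 2: uniform Lipschitz bounds, and the comparison.} Using Step 1 and interior Cauchy estimates read in finitely many local charts one obtains, for $t\in[0,T]$: (a) $\sup_L\|df_t\|\le C(L,L',T)$ for fixed $L\subset\subset L'\subset\subset M$, because $f_t|_{L'}=f_T\circ\v_{t,T}|_{L'}$ maps $L'$ into a $t$-independent compact of $N$ (Step 1); and (b) $\sup_{f_t(\tilde K)}\|d(f_t^{-1})\|\le C'(\tilde K,T)$ for a fixed compact $\tilde K\subset\subset M$, where the $t$-independence uses that, after enlarging $\tilde K$ to $\tilde K\subset\subset\operatorname{int}\tilde K'\subset\subset M$, the quantity $\operatorname{dist}_{d_N}\!\big(f_t(\tilde K),\,N\setminus f_t(\operatorname{int}\tilde K')\big)$ depends continuously on $t$ (Step 1) and hence is bounded below on $[0,T]$. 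Now fix $K\subset\subset M$, $T>0$ and let $\tilde K$ be a compact set containing all $\v_{s,t}(z)$ with $z\in K$, $0\le s\le t\le T$. If $(\v_{s,t})$ is an $L^d$-evolution family, then for $z\in K$ and $0\le s\le t\le T$, integrating $df_t$ along a short $d_M$-path from $z$ to $\v_{s,t}(z)$ (splitting $[s,t]$ if needed) and using (a) and \eqref{ck-evd} with $u=s$ gives $d_N(f_s(z),f_t(z))=d_N(f_t(\v_{s,t}(z)),f_t(z))\le C\int_s^t c_{T,K}(\xi)\,d\xi$, which is \eqref{LCdef} with $k_{K,T}=C\,c_{T,K}\in L^d$. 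Conversely, if $(f_t)$ is an $L^d$-Loewner chain, apply \eqref{LCdef} with the compact set $\tilde K$; for $0\le s\le u\le t\le T$ and $z\in K$ set $w:=\v_{s,u}(z)\in\tilde K$, note $\v_{u,t}=f_t^{-1}\circ f_u$ on $M$ and $\v_{u,t}(w)=\v_{s,t}(z)\in\tilde K$, so $f_u(w)$ and $f_t(w)$ both lie in the fixed compact $f_t(\tilde K)$; then (b) together with \eqref{LCdef} yields $d_M(\v_{s,t}(z),\v_{s,u}(z))=d_M(f_t^{-1}(f_u(w)),f_t^{-1}(f_t(w)))\le C'\int_u^t k_{\tilde K,T}(\xi)\,d\xi$, which is \eqref{ck-evd} with $c_{T,K}=C'\,k_{\tilde K,T}\in L^d$.

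\textbf{Where the difficulty lies.} The algebra in Step 2 is routine once the uniform bounds are available; the actual work is Step 1 and item (b) of Step 2. In (b) nothing a priori prevents the ``thickness'' of $f_t(M)$ near $f_t(\tilde K)$ from degenerating as $t$ varies, and one must invoke the continuity in $t$ of the chain — which is precisely where the $L^d$ integrability enters — together with compactness of $[0,T]$ to produce a uniform collar. Similarly, the assertion in Step 1 that the evolution family cannot reach the ideal boundary of $M$ in finite time relies essentially on the $L^d$ estimates; the details of both are carried out in \cite{ABHK}.
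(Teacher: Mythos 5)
The paper states this theorem without proof, citing \cite{ABHK}, so there is no in-text argument to compare against; your sketch can only be judged on its own terms. Your overall strategy --- turning $f_s=f_t\circ\v_{s,t}$ into a two-sided comparison of increments via Lipschitz bounds for $f_t$ and $f_t^{-1}$ on compacta, uniform in $t\in[0,T]$, obtained from the factorizations $f_t=f_T\circ\v_{t,T}$ and $\v_{u,t}=f_t^{-1}\circ f_u$ --- is the right one and is essentially how the equivalence is established in \cite{ABHK}. The telescoping/splitting device that keeps the connecting paths inside a fixed compact, so that the local Lipschitz constants apply, is also correct (modulo a harmless enlargement of $\tilde K$ to accommodate the intermediate points $f_{v_i}(w)=f_t(\v_{v_i,t}(w))$, which live in $\v_{a,b}(\tilde K)$ rather than in $\tilde K$ itself).

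Two points deserve pushback. First, the relative compactness claims in Step 1 are genuinely the crux, and one of your justifications is flawed as stated: from \eqref{ck-evd} you only get that the orbits $\v_{s,t}(K)$ are $d_M$-\emph{bounded}, and $d_M$ is the integrated distance of an arbitrary Hermitian metric, which is not assumed complete, so boundedness does not yield relative compactness. The correct route goes through the hypothesis that $M$ is complete hyperbolic: one compares $d_M$ with $k_M$ on compacta, uses that each $\v_{s,t}$ contracts $k_M$, and invokes completeness of $k_M$ (equivalently, tautness) to confine the orbits; this is the actual content of the lemma of \cite{Br-Co-Di-EF2} you cite, and it should be invoked for that reason. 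Second, both the compactness statement in the chain-to-family direction and the uniform collar bound (b) are deferred to \cite{ABHK} --- that is, to the very proof being reconstructed --- so the argument is not self-contained at exactly its hardest points. That said, the tools you name for them (a Hurwitz/degree argument showing that $f_t(\operatorname{int}\tilde K')$ contains a fixed neighborhood of $f_{t_0}(\tilde K)$ for $t$ near $t_0$, normal families, and compactness of $[0,T]$) are the correct ones, and once those two lemmas are supplied your Step 2 does close the proof.
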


As one can imagine, once the general Loewner equation is established and Loewner chains have been well defined, even the Loewner-Kufarev PDE can be generalized. We state here a result in this sense from \cite{ABHK}:

\begin{theorem}
Let $M$ be a complete hyperbolic complex manifold, and let $N$ be a complex manifold of the same dimension.
Let $G:M\times \R^+\to TM$ be a Herglotz vector
field of order $d\in[1,+\infty]$
associated with the $L^d$-evolution family $(\varphi_{s,t})$. Then a family of univalent mappings $(f_t\colon M\to N)$ is an $L^d$-Loewner chain associated with $(\v_{s,t})$ if and only if   it is locally absolutely continuous
on $\R^+$ locally uniformly with respect to $z\in M$ and solves the  Loewner-Kufarev PDE
\begin{equation*}
\frac{\partial f_s}{\partial s}(z)=-(df_s)_zG(z,s),\quad\mbox{a.e. }s\geq 0,z\in M.
\end{equation*}
\end{theorem}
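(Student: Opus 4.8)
The plan is to prove the equivalence by differentiating the defining relation of an associated Loewner chain and, conversely, by integrating the PDE back to the functional equation. Throughout I will use Theorem~\ref{prel-thm}, which gives the correspondence between the Herglotz vector field $G$ and the evolution family $(\v_{s,t})$ via \eqref{solve}, together with the fact (already recorded) that a $L^d$-evolution family on a complete hyperbolic $M$ admits an associated $L^d$-Loewner chain, and that $L^d$-regularity transfers between the two objects.

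First I would treat the direction ``Loewner chain $\Rightarrow$ PDE''. Suppose $(f_t\colon M\to N)$ is an $L^d$-Loewner chain associated with $(\v_{s,t})$, so that $f_s=f_t\circ\v_{s,t}$ for all $0\le s\le t$. Fix $z\in M$ and $t\ge 0$. For $s$ slightly less than $t$ one has $f_s(z)=f_t(\v_{s,t}(z))$, and for $s$ slightly greater, $f_t(z)=f_s(\v_{t,s}(z))$; I would instead hold $s$ fixed and differentiate $f_s=f_u\circ\v_{s,u}$ with respect to $u$ at $u=s$. Since $(f_t)$ is $L^d$-Loewner, the curve $u\mapsto f_u(z)$ is locally absolutely continuous, locally uniformly in $z$, with a.e. derivative $\partial f_u/\partial u$; since $(\v_{s,t})$ solves \eqref{solve}, the curve $u\mapsto\v_{s,u}(z)$ is locally absolutely continuous with a.e. derivative $G(\v_{s,u}(z),u)$. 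Because $f_u$ is holomorphic, hence $C^1$ in the real sense, the chain rule for compositions of an absolutely continuous path with a $C^1$ map gives, for a.e. $u$,
\begin{equation*}
0=\frac{\partial}{\partial u}\bigl(f_u\circ\v_{s,u}\bigr)(z)=\frac{\partial f_u}{\partial u}(\v_{s,u}(z))+(df_u)_{\v_{s,u}(z)}\,G(\v_{s,u}(z),u).
\end{equation*}
Evaluating at $u=s$ (using $\v_{s,s}=\mathrm{id}$) and renaming $s$ yields $\frac{\partial f_s}{\partial s}(z)=-(df_s)_z\,G(z,s)$ for a.e. $s$, with $z$ arbitrary; the exceptional null set can be made independent of $z$ by the standard Fubini/separability argument using continuity in $z$. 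Local absolute continuity of $s\mapsto f_s(z)$, locally uniform in $z$, is exactly the $L^d$-estimate \eqref{LCdef}, so that part of the conclusion is automatic.

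For the converse, ``PDE $\Rightarrow$ Loewner chain'', suppose $(f_t\colon M\to N)$ is a family of univalent maps, locally absolutely continuous on $\R^+$ locally uniformly in $z$, solving $\partial f_s/\partial s(z)=-(df_s)_z\,G(z,s)$ for a.e. $s$. Fix $z\in M$ and $0\le s\le t$, and consider the $N$-valued curve $u\mapsto \Phi(u):=f_u(\v_{s,u}(z))$ for $u\in[s,t]$. Using the same chain rule as above, $\Phi$ is locally absolutely continuous and its a.e. derivative is $\frac{\partial f_u}{\partial u}(\v_{s,u}(z))+(df_u)_{\v_{s,u}(z)}G(\v_{s,u}(z),u)=0$ by the PDE. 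Hence $\Phi$ is constant on $[s,t]$, giving $f_t(\v_{s,t}(z))=\Phi(t)=\Phi(s)=f_s(z)$. Since $z$ was arbitrary, $f_s=f_t\circ\v_{s,t}$, so $(f_t)$ is a subordination chain associated with $(\v_{s,t})$; being made of univalent maps, it is an algebraic Loewner chain, and its hypothesized local absolute continuity is precisely the $L^d$-estimate \eqref{LCdef}, so it is an $L^d$-Loewner chain.

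The main obstacle is the rigorous justification of the chain rule ``$\frac{d}{du}f_u(\v_{s,u}(z))=\partial_u f_u\circ\v_{s,u}+(df_u)\cdot\partial_u\v_{s,u}$'' in a setting where $f_u$ depends on the time variable $u$ that also appears inside $\v_{s,u}$, and where differentiability in $u$ holds only almost everywhere. I would handle this by writing the increment $f_{u+h}(\v_{s,u+h}(z))-f_u(\v_{s,u}(z))$ as $\bigl[f_{u+h}(\v_{s,u+h}(z))-f_{u}(\v_{s,u+h}(z))\bigr]+\bigl[f_{u}(\v_{s,u+h}(z))-f_u(\v_{s,u}(z))\bigr]$: the second bracket is controlled by the $C^1$-dependence of $f_u$ on the space variable together with the absolute continuity of $u\mapsto\v_{s,u}(z)$, while the first bracket is handled using the joint measurability and the $L^d$-bound on $\partial_u f_u$ coming from \eqref{LCdef}, together with local uniformity in the space variable so that evaluating at the moving point $\v_{s,u+h}(z)$ is legitimate. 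This is the standard ``derivative of a composition of two absolutely continuous objects'' lemma, and once it is in place both implications are immediate; I would either cite the corresponding lemma from \cite{ABHK} or include this estimate as the one genuinely non-formal step.
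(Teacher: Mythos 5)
The paper does not actually prove this theorem --- it only states it, citing \cite{ABHK} --- so there is no in-text proof to compare against; your argument is the standard one (differentiate $u\mapsto f_u(\v_{s,u}(z))$ in one direction, show it is absolutely continuous with vanishing a.e.\ derivative in the other) and is essentially the proof given in \cite{ABHK}, with the genuinely delicate point, the chain rule for a composition that is only a.e.\ differentiable in time, correctly identified and plausibly handled via your increment decomposition together with the locally uniform $L^1$ domination from \eqref{LCdef} and condition (iii) of Definition \ref{Her-vec-man}. One small imprecision: in the converse direction, local absolute continuity locally uniformly in $z$ only gives an $L^1$ density, not the $L^d$ bound required in \eqref{LCdef}; this is repaired either by the regularity-transfer theorem you cite at the outset (once $(f_t)$ is shown to be an algebraic Loewner chain associated with the $L^d$-evolution family $(\v_{s,t})$) or by reading the $L^d$ bound directly off the PDE via $\|(df_s)_zG(z,s)\|\leq \sup_{z\in K}\|(df_s)_z\|\,C_{K,T}(s)$.
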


\subsection{The Loewner range} As we saw before, given a $L^d$-evolution family (or just an algebraic evolution family) on a complex manifold, it is well defined the Loewner range ${\sf Lr}(\v_{s,t})$ as the class of biholomorphism of the range of any associated Loewner chain.

Note that if a manifold $N$ is in the Loewner range of an evolution family $(\v_{s,t})$ on a complex manifold $M$, {\sl i.e.}, the biholomorphic class of $N$ coincides with ${\sf Lr}(\v_{s,t})$, then there exists a Loewner chain $(f_t)$ associated to $(\v_{s,t})$ such that $\cup_{t\geq 0}f_t(M)=N$.

One can ask if it is possible to know the Loewner range in case $M$ is a given manifold. For instance, if $M$ is the unit disc $\D\subset \C$, the results of \cite{Co-Di-Gu} imply that the Loewner range of any evolution family of the unit disc is a domain in $\C$ (possibly $\C$ itself as in the classical case). We first start with the following simple remark:

\begin{remark}
If $M$ is simply connected (and non compact) then the Loewner range of any algebraic evolution family of $M$ is simply connected (and non compact). Indeed, if $f_t:M\to N$ is an associated algebraic  Loewner chain, then the Loewner range is biholomorphic to the union of $f_t(M)$ which is an increasing sequence of simply connected domains.
\end{remark}

In particular, if $M=\D$ the unit disc, then the Loewner range of any evolution family on $\D$ is a simply connected non compact Riemann surface, thus, by the uniformization theorem, the Loewner range is either the unit disc $\D$ or $\C$.

In higher dimension the situation is however different: there exists an algebraic evolution family $(\v_{s,t})$ on $\B^3$ which does not admit any associated algebraic Loewner chain with range in $\C^3$ (see \cite[Section 9.4]{A10}). The example is however not regular, and in fact it is not known whether there exists a $L^d$-evolution family on $\B^n$ whose Loewner range does not contain any open domain of $\C^n$.

\medskip

One can somehow try to understand the biholomorphic type of the Loewner range of an evolution family $(\v_{s,t})$ by looking at the dynamics of the family itself. Philosophically this makes sense if one consider the equation $f_t \circ \v_{s,t}=f_s$ as a sort of ``bi-parametric linearization''. The idea is the following: let $\v: M \to M$ be a univalent map. If there exists a univalent map $\sigma: M \to N$, called ``intertwining map'', such that $\sigma\circ \v=\Phi\circ \sigma$, where $\Phi: N \to N$ is an automorphism, one says that $\sigma$ {\sl linearizes} the map $\v$. The automorphism $\Phi$ is generally very simple, but the image $\sigma(M)$ in $N$ might have a complicated geometry, which reflects the dynamics of $\v$.

Starting from this considerations, it is natural to give some answers based on the
asymptotic behavior of the Kobayashi pseudometric under the corresponding evolution family.

\begin{definition}
Let $M$ be a complex manifold. The {\sl Kobayashi pseudometric} $\kappa_M: TM \to \R^+$ is defined by
\[
\kappa_M(z; v):=\inf\{ r>0: \exists g: \D \to M\  \hbox{holomorphic}\ : g(0)=z, g'(0)=\frac{1}{r}v\}.
\]
\end{definition}

The Kobayashi pseudometric has the remarkable property of being contracted by holomorphic maps, and its integrated distance is exactly the Kobayashi pseudodistance. We refer the reader to \cite{Abate} and \cite{Kob} for details.

\begin{definition}
Let $(\v_{s,t})$ be an algebraic evolution family on a complex manifold $M$. For $v\in T_zM$ and $s\geq 0$  we define
\begin{equation}\label{beta}
\beta^s_z(v):=\lim_{t\to \infty} \kappa_M(\v_{s,t}(z); (d\v_{s,t})_z(v)).
\end{equation}
\end{definition}

Since the Kobayashi pseudometric is
contracted by holomorphic mappings the limit in \eqref{beta} is well defined.

The function $\beta$ is the bridge between the dynamics of an algebraic evolution family  $(\v_{s,t})$ and the geometry of its Loewner range. Indeed, in \cite{ABHK} it is proved that, if $N$ is a representative of the Loewner range of $(\v_{s,t})$ and $(f_t:M\to N)$ is an associated algebraic Loewner chain, then
for all $z\in M$ and $v\in T_zM$ it follows
\[
f_s^*\kappa_N(z;v)=\beta^s_z(v).
\]

\begin{remark}
Let $(\v_{s,t})$ be an algebraic evolution family in the unit disc $\D$. The previous formula allows to determine the Loewner range: if $\beta^s_z(v)=0$ for some $s>0, z\in \D$ ($v$ can be taken to be $1$), then the Loewner range is $\C$, otherwise it is $\D$.
\end{remark}

Such a result can be generalized to a complex manifold $M$.
Let ${\sf aut}(M)$ denote the group of
holomorphic automorphisms of a complex manifold $M$. Using a result by J. E. Forn\ae ss and N. Sibony \cite{F-S}, in \cite{ABHK} it is shown that the previous formula implies

\begin{theorem}
Let $M$ be a complete hyperbolic complex manifold and assume that
$M/\sf{aut}(M)$ is compact. Let $(\v_{s,t})$ be an algebraic evolution
family on $M$.  Then
\begin{enumerate}
\item If there exists $z\in M$, $s\geq0$ such that $\beta^s_z(v)\neq 0$ for
all $v\in T_zM$ with $v\neq 0$ then  ${\sf Lr}(\v_{s,t})$ is biholomorphic to $M$.
\item If there exists $z\in M$, $s\geq0$ such that $\dim_\C\{v\in T_zM: \beta_z^s(v)= 0\}=1$  then  ${\sf Lr}(\v_{s,t})$ is a fiber bundle with fiber $\mathbb{C}$
over a closed complex submanifold of $M$.
\end{enumerate}
\end{theorem}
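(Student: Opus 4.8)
The plan is to transport the dynamical information carried by $\beta$ into the geometry of the Loewner range through the identity $f_s^{*}\kappa_N(z;v)=\beta^s_z(v)$ recalled above, and then to invoke the Forn\ae ss--Sibony analysis of increasing sequences of complex manifolds. First, using Theorem \ref{ABHKthm}, fix an algebraic Loewner chain $(f_t\colon M\to N)$ associated with $(\v_{s,t})$; then $N$ is a representative of ${\sf Lr}(\v_{s,t})$, and since each $f_t$ is a biholomorphism onto the open set $f_t(M)$ with $f_s(M)\subset f_t(M)$ for $s\le t$ and $N=\bigcup_{t\ge 0}f_t(M)$, the manifold $N$ is realized as an increasing union of copies of $M$. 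The standing hypotheses --- $M$ complete hyperbolic and ${\sf aut}(M)$ acting with compact quotient --- place us exactly in the framework of \cite{F-S}.

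Next I would use the identity $f_s^{*}\kappa_N(z;v)=\beta^s_z(v)$: under the isomorphism $(df_s)_z\colon T_zM\to T_{p_0}N$, where $p_0:=f_s(z)$, the subspace $E:=\{v\in T_zM:\beta^s_z(v)=0\}$ is carried to the set of directions at $p_0$ along which $\kappa_N$ vanishes. Hence the hypothesis of (1) says that $\kappa_N(p_0;\cdot)$ is positive definite, while that of (2) says that the $\kappa_N$-null directions at $p_0$ span a complex line. Because $\kappa_M$ is a continuous ${\sf aut}(M)$-invariant metric with compact quotient, it is uniformly comparable to a Hermitian metric on $M$; feeding this uniform non-degeneracy into the Forn\ae ss--Sibony machinery shows that the dimension of the $\kappa_N$-null locus is locally constant on $N$, so the information available at the single point $p_0$ propagates.

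In case (1) the $\kappa_N$-null locus is everywhere trivial, so $\kappa_N$ is a genuine metric and $N$ is (complete) hyperbolic; applying \cite{F-S} to the increasing union of copies of $M$ with ${\sf aut}(M)$ cocompact then yields that $N$ is biholomorphic to $M$ --- the mechanism being that, after pre-composing the embeddings $f_t\colon M\to N$ with suitable automorphisms of $M$, the uniform metric bounds keep them in a normal family, and a Montel/diagonal argument extracts a biholomorphism. In case (2) the null directions of $\kappa_N$ form a rank-one holomorphic subbundle of $TN$; by \cite{F-S} this subbundle is integrable and its leaves are Riemann surfaces biholomorphic to $\C$ (a leaf cannot be compact, as it would then sit inside a hyperbolic $f_t(M)$, and the remaining non-hyperbolic possibility $\C^{*}$ is excluded by the increasing-union structure). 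The leaf space $B$ is then a complex manifold of dimension $\dim_{\C}M-1$, the projection $\pi\colon N\to B$ is a holomorphic $\C$-bundle, and \cite{F-S} furthermore identifies $B$ with a closed complex submanifold of $M$.

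The crux of the argument is the third paragraph, i.e.\ converting the pointwise degeneracy of $\kappa_N$ into a global structure on $N$. The three genuinely delicate points --- local constancy of the null-rank (which really uses the cocompactness hypothesis), the fact that every null leaf is $\C$ rather than some other non-hyperbolic curve, and the realization of the leaf space as a \emph{closed} submanifold of $M$ --- are precisely what is extracted from \cite{F-S}; the new input here is the dictionary $\beta^s_z\leftrightarrow\kappa_N\circ(df_s)_z$ that makes that external result applicable to the Loewner range.
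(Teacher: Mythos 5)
Your proposal is correct and takes essentially the same route the paper indicates: the paper itself gives no proof of this theorem beyond stating that the identity $f_s^*\kappa_N(z;v)=\beta^s_z(v)$ combined with the Forn\ae ss--Sibony theorem on increasing unions of copies of a complete hyperbolic manifold with cocompact automorphism group yields the dichotomy, and that is precisely the dictionary you set up. Your third paragraph fleshes out the internals of the Forn\ae ss--Sibony machinery (local constancy of the null rank, the $\C$-foliation, the leaf space as a closed submanifold of $M$) in a way that cannot be checked against the paper, which delegates all of this to \cite{F-S} and \cite{ABHK}, but it is consistent with the cited result.
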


In particular one can apply the previous result to $M=\B^n$ (or even to
the polydiscs in $\C^n$) and obtaining that   any algebraic evolution family    on the unit ball $\B^n$ such that for some $z\in \B^n$, $s\geq 0$ it follows that $\dim_\C\{v\in \C^n : \beta^s_z(v)=0\}\leq 1$, has an open domain in $\C^n$ contained in its Loewner range.

Finally, we note that the Loewner range of an evolution family is strictly related to the so called ``abstract basin of attraction'' of a family of random maps, as studied in hyperbolic dynamics. We refer the reader to \cite{A} for more about this.

\end{document}